%% file: lqr_high_prob.tex
\documentclass[9pt]{article} 
\usepackage{hyperref}
\usepackage{url}
\usepackage{smile}
\usepackage{graphicx} 
\usepackage{algorithm}
\usepackage{algorithmic}
\usepackage{epstopdf}
\usepackage[margin=1.0in]{geometry}
\usepackage[export]{adjustbox}
\usepackage{mathtools, cuted}
\usepackage{bbm}
\usepackage{wrapfig}
\usepackage{subcaption}
\usepackage{caption}
\usepackage{enumitem}
\usepackage{tabularx}
\usepackage{smile}
\usepackage{tcolorbox}
\usepackage{enumitem}
\usepackage{mathtools, cuted}
\usepackage{caption}
\usepackage{subcaption}

\numberwithin{equation}{section}

\usepackage[nocompress]{cite}

\hypersetup{
    colorlinks=true,
    linkcolor=blue,
    anchorcolor=blue, 
    citecolor=blue,
}

\allowdisplaybreaks[4]

\renewcommand{\frac}[2]{\tfrac{#1}{#2}}

\title{(Cheap) Stochastic Policy Gradient Converges with High Probability for Linear Quadratic Regulator}
\author{
Yan Li\thanks{Department of Industrial and Systems Engineering,
Texas A\&M University. (E-mail: \url{yan.li@tamu.edu}).}
\and 
Chengze Xie\thanks{
Department of Industrial and Systems Engineering,
Texas A\&M University. (E-mail: \url{changze@tamu.edu}). 
}
}

\date{\vspace{-6ex}}
\begin{document}
\maketitle

\begin{abstract}
We study the convergence of the vanilla stochastic policy gradient method applied to the linear quadratic regulator (LQR) problem. The method is cheap in the following sense: (1) at each iteration only $\tilde{\cO}(1)$ interactions with the environment are needed, therefore allowing frequent policy improvement steps, and (2) to ensure stability throughout and convergence to an $\epsilon$-optimal policy with probability $1-\delta$, only $\cO({\tt Polylog}(1/\delta)/\epsilon)$ interactions are needed. To the best of our knowledge, this appears to be the first time that a stochastic model-free policy optimization method for LQR converges with high probability with $\tilde{\cO}(1)$ per-iteration computation and polylogarithmic dependence on the confidence level. The convergence analysis presented here is agnostic to LQR specifics and hence could be potentially generalized to a broader class of problems.
\end{abstract}

\input{intro}

\input{spg}

\input{convergence}

\input{oracle}

\input{conclusion}


\bibliographystyle{plain}
\bibliography{reference}

\end{document}

%% file: intro.tex

\section{Introduction}

We consider the following  undiscounted infinite-horizon linear quadratic regulator (LQR) problem:\footnote{
As will be seen in Section \ref{condition_noise}, the analysis to be discussed in this manuscript can be extended to the discounted-cost and the average-cost settings without essential changes. 
}
\begin{align}\label{lqr_obj}
\min_{K \in \RR^{m \times n}}  f(K)=\EE_{x_0 \sim \cD} \sbr{ \tsum_{i\geq 0}(x_i^\top Q x_i+a_i^\top R a_i) } ,
\end{align} 
where $Q \in \RR^{n \times n}$, $R \in \RR^{m \times m}$ are positive definite matrices, and  
\begin{align}\label{eq_dynamics}
x_{i+1}=Ax_i+Ba_i,~ a_i=-Kx_i,  ~ \EE_{x_0 \sim \cD}  \sbr{x_0 x_0^\top} = I_n.
\end{align}
We assume in addition that $\norm{x_0} \leq D_X$ for some $D_X > 0$. 
To facilitate our discussion, 
for any policy $K$, we define $\Delta(K) = f(K) - f^*$, where $f^* = \min_{K} f(K)$. 
We assume there exists a policy with finite cost, so that $f^* < \infty$. 
It is well known that the restriction to linear Markov policies in \eqref{eq_dynamics} suffices to obtain optimality for \eqref{lqr_obj} over the class of all history-dependent nonlinear policies \cite{kalman1960general, kalman1960contributions}.

We can roughly categorize the solution methods for solving LQR into three classes. Let us first operate in the idealized setting where the underlying model parameters $(A, B, Q, R)$ are known. 
The first class 
builds upon the observation that the optimal policy $K^*$ of \eqref{lqr_obj} can be computed via solving the algebraic Riccati equation (ARE), and therefore designs specialized numerical linear algebra methods for solving the ARE \cite{laub1979schur,vandooren1981generalized}. 
The second class of methods based on the dynamic programming principle either operates in the policy space  \cite{hewer1971iterative,bertsekas1995dynamic}
or value space \cite{bellman1957dynamic,bertsekas1995dynamic}.
In recent years, a new class of first-order methods based on nonlinear programming has received considerable attention in MDPs  \cite{kakade2001natural,lan2023policy,agarwal2021theory,xiao2022convergence} and later in LQR \cite{fazel2018global,malik2020derivative}. Such methods iteratively update the policy via its first-order (gradient) information. 
The global convergence of (natural) policy gradient applied to LQR,  despite the latter being a non-convex function, has been established in \cite{fazel2018global}, followed by an active line of research \cite{fatkhullin2021optimizing, malik2020derivative,yang2019global,zhou2023single,zeng2024two,ju2025model,moghaddam2025lens}.

Let us now consider the setting when the exact parameters are unknown. Solution methods in this case largely operate based on partially learning about the system by interacting with the environment.\footnote{In this manuscript we refer to one interaction as one state transition after committing an action. We choose not to refer to such interactions as samples as in some prior work \cite{fazel2018global,malik2020derivative}, since there is no randomness in such transitions for the undiscounted setting.
Note that LQR with stochastic transitions only makes sense for the discounted-cost or average-cost settings. 
} 
In principle all three aforementioned classes of solution methods have their counterparts in this setting.
A natural idea underlying such development is to first  estimate the system parameters after collecting enough interactions, followed by solving a nominal LQR with the estimated parameters \cite{mania2019certainty}.
We refer to this idea as the model-based principle. 
A particular question is therefore to determine the number of interactions needed for dynamics estimation in order to find an $\epsilon$-optimal policy.
This question has been  discussed in \cite{dean2020sample,mania2019certainty} for the average-cost setting with potential stochastic transition dynamics.
It should be noted that for \eqref{lqr_obj}, a constant number of interactions is sufficient for forming a linear system that uniquely identifies system parameters provided the query state-action pair provides sufficient coverage \cite{vanwaarde2020data}. 

An alternative principle for designing interaction-based methods for LQR is to avoid explicit estimation of model parameters altogether, which we refer to as model-free methods.
This has been considered, for instance, for policy-based methods \cite{bradtke1994adaptive,jiang2012computational} and value-based methods \cite{bian2016value,jiang2024adaptive,lai2025robust}. 
In particular, it has been shown that $\tilde{\cO}(1/\epsilon)$ environment interactions are needed for policy iteration \cite{krauth2019finite} and $\tilde{\cO}(1/\epsilon)$ for value iteration \cite{lai2025robust}.
Similar progress has been made for first-order (gradient-based) methods \cite{fazel2018global,malik2020derivative,yang2019global,zhou2023single,zeng2024two,ju2025model,moghaddam2025lens}. 
For instance, $\cO({\tt Poly}(1/\epsilon))$ interaction complexity has been first established in  \cite{fazel2018global}, followed by $\tilde{\cO}(1/\epsilon^5)$  in \cite{yang2019global},  $\tilde{\cO}(1/\epsilon^2)$  in \cite{malik2020derivative}, $\tilde{\cO}(1/\epsilon^{3/2})$  in \cite{zeng2024two}, and more recently, $\tilde{\cO}(1/\epsilon)$ in \cite{malik2020derivative,zhou2023single,ju2025model,moghaddam2025lens}.

Why would one consider model-free methods over model-based ones?
We can perhaps argue that model-based methods are somewhat expensive, in the sense that a sufficiently large number of interactions  need to be collected before we start to improve the policy \cite{dean2020sample,mania2019certainty}, with non-trivial computation consumed between consecutive policy improvements \cite{hewer1971iterative,bertsekas1995dynamic}. 
In this sense a reasonably good model-free method should waste no time in collecting too many environment interactions or consume too much computation for each policy improvement, and should ensure incremental yet  steady progress towards the optimal policy. 
Unfortunately some of the aforementioned model-free methods with strong performance guarantees do not enjoy such properties. 
In essence they operate in a way that the methods  behave similarly to their deterministic counterparts, which assume exact model information.
This is typically achieved by collecting a large number of interactions (via long trajectories or mini-batches) at each round of policy improvement to ensure that the stochastic update to the policy is close to its deterministic counterpart  \cite{krauth2019finite,fazel2018global,yang2019global,malik2020derivative,ju2025model,moghaddam2025lens}. 

In this manuscript we are interested in the convergence of cheap stochastic policy optimization methods.
By cheap we mean that they use $\tilde{\cO}(1)$ interactions and computation between consecutive policy improvements. 
The small number of interactions implies that the update is truly stochastic instead of being a close approximation of deterministic updates, and accompanied with $\tilde{\cO}(1)$ computation, ensures fast policy improvement. 
We will focus on cheap gradient-based methods \cite{malik2020derivative,moghaddam2025lens,zeng2024two,zhou2023single}. 
Existing convergence certificates come either with constant probability \cite{malik2020derivative,moghaddam2025lens,fazel2018global,mohammadi2021linear}, or in expectation \cite{zeng2024two,zhou2023single}. 
It appears quite surprising that no high-probability results with polylogarithmic dependence on the confidence level have been established in the prior literature, despite fruitful development of boosting expectation to high-probability statements in stochastic optimization \cite{lan2020first,davis2021low,nemirovskij1983problem,ji2026computation}.
It should be noted that the aforementioned convergence in expectation makes the non-trivial assumption on uniform stability throughout policy optimization \cite{zeng2024two,zhou2023single}.
Indeed, the main technical difficulty associated with obtaining convergence for cheap stochastic policy optimization methods (even in expectation) pertains to maintaining stability of the policies throughout policy optimization, as nothing can be said about the stochastic gradient once the policy becomes unstable and the objective \eqref{lqr_obj} itself blows up to infinity.
In view of this, the dependence  of gradient noise on the iterate  within LQR behaves in a completely non-smooth way compared to problems recently studied in optimization literature with unbounded domains,  which depends smoothly on the distance to the optimal solution \cite{kotsalis2022simple,ilandarideva2025accelerated,ji2026computation, telgarsky2022stochastic}.\footnote{
Of course \eqref{lqr_obj} also has an unbounded domain. This on the other hand is not our main concern within the analysis. 
}

\vspace{0.05in}

Our essential contribution in this manuscript is to provide a framework for convergence analysis that allows us to obtain such high-probability convergence guarantees. 
Our contributions can be roughly summarized as follows. 
First, we propose a rather generic framework that allows us to establish high-probability convergence of LQR with cheap stochastic policy gradient methods, provided the gradient construction oracle satisfies some minimalist assumption. 
In particular, the convergence framework uses no LQR specifics, and applies to generic black-box optimization for objectives satisfying the Polyak-Lojasiewicz (PL) property \cite{polyak1963gradient} and local smoothness, and hence can be potentially applied in broader setups. 
Second, by instantiating the framework with a concrete choice of stepsizes, we establish an $\cO(1/\sqrt{T})$ convergence rate of SPG, which is further improved to $\cO(1/T)$ by utilizing the PL property. 
Finally, we verify that the minimal stochastic gradient oracle assumption can be indeed satisfied by slight modification of an existing gradient estimator \cite{malik2020derivative}, and correspondingly establish its $\tilde{\cO}({\tt Polylog (1/\delta)}/\epsilon)$ interaction complexity with probability at least $1-\delta$.

%% file: spg.tex


\section{Stochastic Policy Gradient}\label{sec_spg}

Going forward, we will focus on the vanilla stochastic policy gradient (SPG) method applied to the LQR problem. 
At any given iteration $t$, SPG first assumes access to a stochastic oracle that produces an  estimate $G_t$ of $\nabla f(K_t)$, and then updates the policy via 
\begin{align}\label{eq_spg_update}
 K_{t+1}=K_t - \eta_t G_t, ~ \forall t \geq 0 .
\end{align}
Throughout the rest of our discussion, we assume that the initial policy $K_0$ is stable.

\begin{definition}
A controller $K$ is stable if $\rho(A-BK)<1$, where $\rho(\cdot)$ denotes the spectral radius.
\end{definition}

It is clear that with \eqref{eq_dynamics}, a policy $K$ being stable is equivalent to $\Delta(K) < \infty$. 
Let us use $\cF_t$ to denote the corresponding filtration up to iteration $t$ (excluding $t$).
Our subsequent analysis will make use of the following condition on the stochastic estimate.

\begin{condition}\label{condition_noise}
For any $\Delta > 0$, and $\delta \in (0,1)$, there exist $b \geq 0$, $V_\Delta > 0$ and $M_{\Delta,\delta} >0$, such that 
\begin{align}
 \norm{\EE[G_t| \cF_t]-\nabla f(K_t)} & \leq b, \label{condition_bound_bias} \\
 \EE[\norm{G_t}^2| \cF_t] &\leq V_\Delta,  \label{condition_bound_moment}\\
 \PP\rbr{\norm{G_t} \leq M_{\Delta,\delta}| \cF_t} & \geq 1- \delta , \label{condition_bound_grad}
\end{align}
for any $K_t$ with $\Delta(K_t) \leq \Delta$. 
\end{condition}

Clearly, Condition \ref{condition_noise} posits that the estimator $G_t$ has small noise and is bounded both in expectation and with high probability. 
It is perhaps worth noting that we allow the upper bounds of $G_t$ (either the moment bound $V_\Delta$ or the high-probability bound $M_{\Delta, \delta}$) to depend on the optimality gap. 
Such a dependence is indeed essential, as the size of the true gradient itself $\nabla f(K_t)$ would depend on the current optimality gap, and blows up to infinity if $K_t$ becomes unstable. 
For the rest of our discussion in this section, we will discuss the convergence of SPG \eqref{eq_spg_update} assuming the stochastic oracle satisfies Condition \ref{condition_noise}.
We will then discuss the construction of such an oracle in Section \ref{sec_oracle}. In particular, the constructed estimate $G_t$ will satisfy \eqref{condition_bound_grad} with probability $1$. 
 Condition \ref{condition_noise}  also appears to be quite flexible, in the sense that it does not necessarily require the estimate $G_t$ to follow light-tail distributions.

We next present two basic properties of the LQR objective \eqref{lqr_obj}, namely it is locally smooth, and satisfies a global PL property \cite{fazel2018global,malik2020derivative}.
It should be noted that these will be the only LQR-specific properties we utilize within our discussion in Section \ref{sec_convergence}. 

\begin{lemma}\label{lemma_lqr_landscape}
There exists $\mu>0$ such that for any $\Delta>0$, one can choose $L_{\Delta},r_{\Delta}>0$ so that for every policy $K$ with $\Delta(K) < \Delta$, we have that 
$K'$ is stable for any $K'$ satisfying $\norm{K' - K} \leq r_{\Delta}$, and 
\begin{align}
 f(K') & \leq f(K)+\inner{\nabla f(K)}{K' - K}
                  +\frac{L_{\Delta}}{2}\norm{K - K'}^2,     \label{ineq_local_smoothness}\\
 \mu\Delta(K) & \leq \norm{\nabla f(K)}^2
                    \leq 2L_{\Delta}\Delta(K).    \label{ineq_gradient_gap_bounds}
\end{align}
\end{lemma}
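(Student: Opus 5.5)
We will obtain the statement from the standard LQR formulas of \cite{fazel2018global,malik2020derivative}. For a stable $K$, let $P_K$ solve the Lyapunov equation $P_K = Q + K^\top R K + (A-BK)^\top P_K (A-BK)$, and let $\Sigma_K = \sum_{i\geq 0}(A-BK)^i (A-BK)^{i\top}$ be the state correlation matrix. Since $\EE[x_0x_0^\top]=I_n$, we have
\[
f(K)=\mathrm{tr}(P_K), \qquad \nabla f(K)=2E_K\Sigma_K, \qquad E_K=(R+B^\top P_K B)K-B^\top P_K A.
\]
The plan is to first show that sublevel sets are compact, then derive the PL bound from the gradient-domination lemma, and finally get local smoothness, the stability radius, and the upper gradient bound from uniform bounds on that compact set.

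\textbf{Step 1 (compactness).} Consider $\cS_\Delta=\{K:\Delta(K)\leq \Delta\}$.
\begin{itemize}
\item Since $\Sigma_K\succeq I_n$ and $f(K)=\mathrm{tr}((Q+K^\top RK)\Sigma_K)$, we get $f(K)\geq \sigma_{\min}(R)\norm{K}_F^2$, so $\cS_\Delta$ is bounded.
\item From $f(K)=\mathrm{tr}((Q+K^\top RK)\Sigma_K)\geq\sigma_{\min}(Q)\,\mathrm{tr}(\Sigma_K)$, we get $\norm{\Sigma_K}\leq (f^*+\Delta)/\sigma_{\min}(Q)$, and similarly $\norm{P_K}\leq f^*+\Delta$.
\item $f$ is real-analytic on the open set of stable gains and tends to $+\infty$ at its boundary, because $\Sigma_K$ blows up as $\rho(A-BK)\to 1$. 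Hence $\cS_\Delta$ is closed, and so compact.
\end{itemize}

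\textbf{Step 2 (PL inequality).} We invoke the gradient-domination lemma of \cite{fazel2018global},
\[
f(K)-f^*\leq \norm{\Sigma_{K^*}}\,\sigma_{\min}(R)^{-1}\,\mathrm{tr}(E_K^\top E_K).
\]
Since $\Sigma_K\succeq I_n$, we have $\norm{\nabla f(K)}_F^2=4\norm{E_K\Sigma_K}_F^2\geq 4\,\mathrm{tr}(E_K^\top E_K)$. Together these give $\mu\Delta(K)\leq\norm{\nabla f(K)}^2$ with $\mu=4\sigma_{\min}(R)/\norm{\Sigma_{K^*}}$, and this $\mu$ does not depend on $\Delta$.

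\textbf{Step 3 (stability radius and smoothness).} This is the main obstacle, since $K'$ must stay stable along the whole segment.
\begin{itemize}
\item \emph{Stability radius.} The stable set is open and $\cS_\Delta$ is compact, so the distance from $\cS_\Delta$ to the unstable set is positive. The same holds with a margin: $\cS_{2\Delta}$ is a compact subset of the open set $\{\Delta(\cdot)<3\Delta\}$, so there is $r_\Delta>0$ with $\cS_\Delta+\mathbb{B}(r_\Delta)\subseteq \cS_{2\Delta}$. Explicitly, one can use the perturbation bound of \cite{fazel2018global}, which gives $\norm{\Sigma_{K'}}\leq 2\norm{\Sigma_K}$ whenever $\norm{K'-K}\lesssim \sigma_{\min}(Q)/(\norm{B}(f^*+\Delta))$.
\item \emph{Smoothness.} $f$ is $C^2$ on the compact set $\cS_{2\Delta}$. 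Let $L_\Delta=\max_{\cS_{2\Delta}}\norm{\nabla^2 f}$, which is finite, and can also be bounded explicitly via the Hessian formula in terms of $\norm{P_K}$, $\norm{\Sigma_K}$, $\norm{B}$, $\norm{R}$. Taylor's theorem along the segment $[K,K']\subseteq\cS_{2\Delta}$ then gives \eqref{ineq_local_smoothness}.
\end{itemize}

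\textbf{Step 4 (upper gradient bound).} Take $K'=K-\alpha\nabla f(K)$ with $\alpha=\min\{1/L_\Delta,\ r_\Delta/\norm{\nabla f(K)}\}$. By \eqref{ineq_local_smoothness},
\[
f^*\leq f(K')\leq f(K)-\tfrac{\alpha}{2}\norm{\nabla f(K)}^2.
\]
\begin{itemize}
\item If $\alpha=1/L_\Delta$, this gives $\norm{\nabla f(K)}^2\leq 2L_\Delta\Delta(K)$.
\item Otherwise $\norm{\nabla f(K)}>L_\Delta r_\Delta$, and we need a separate argument. Step 1 gives a uniform bound $\norm{\nabla f}\leq G_\Delta$ on $\cS_\Delta$, so we can enlarge $L_\Delta$ to at least $G_\Delta/r_\Delta$. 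This keeps \eqref{ineq_local_smoothness} valid and forces us into the first case.
\end{itemize}
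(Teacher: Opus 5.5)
Your proof is correct, but it is organized differently from the paper's. The paper does not re-derive the landscape. It imports local smoothness, the stability radius and the PL inequality from \cite{malik2020derivative} (Lemmas 5--6 and p.~14 there), which come with explicit expressions for $\mu$, $L_\Delta$, $r_\Delta$, and it only proves the upper bound $\norm{\nabla f(K)}^2\le 2L_\Delta\Delta(K)$. You instead rebuild Steps 1--3 from the formulas of \cite{fazel2018global} via compactness of sublevel sets. That is self-contained and enough for the purely existential statement. However, your $r_\Delta$ (a distance from a compact set to the complement of an open set) and your $L_\Delta$ (a maximum of $\norm{\nabla^2 f}$ over a compact set) are non-quantitative. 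This matters if one wants the final interaction complexity in terms of $(A,B,Q,R)$.

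There is a minor index slip in Step 3. To place the $r_\Delta$-neighbourhood of $\mathcal{S}_\Delta$ inside $\mathcal{S}_{2\Delta}$, the compact-in-open pair you need is $\mathcal{S}_\Delta\subset\{\Delta(\cdot)<2\Delta\}$, not $\mathcal{S}_{2\Delta}\subset\{\Delta(\cdot)<3\Delta\}$.

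For the upper gradient bound, you and the paper both enlarge $L_\Delta$ so that the full step $\nabla f(K)/L_\Delta$ stays within radius $r_\Delta$; the justifications differ. You use a compactness-based a priori bound on $\norm{\nabla f}$ over $\mathcal{S}_\Delta$. The paper instead steps a distance exactly $r_\Delta$ along $-\nabla f(K)/\norm{\nabla f(K)}$ and uses $f\ge f^*$ to get $\norm{\nabla f(K)}\le \Delta/r_\Delta+L_\Delta r_\Delta/2\le L_\Delta r_\Delta$ once $L_\Delta\ge 2\Delta/r_\Delta^2$. That argument needs nothing beyond the smoothness inequality and the lower bound $f^*$, so it applies to any black-box objective with those two properties. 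Your own second case already contains it: there $f^*\le f(K)-\frac{r_\Delta}{2}\norm{\nabla f(K)}$ gives $\norm{\nabla f(K)}\le 2\Delta/r_\Delta$ directly, so the appeal to compactness in Step 4 is unnecessary.
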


\begin{proof}
From  \cite[Lemmas 5 and 6, and discussion on page 14]{malik2020derivative},  we obtain \eqref{ineq_local_smoothness},  the stability of $K'$, and the first inequality in \eqref{ineq_gradient_gap_bounds}.
Note that without loss of generality we can assume $L_{\Delta} \geq \frac{2\Delta}{r_{\Delta}^2}$.
By taking $K' = K - r_{\Delta} \frac{\nabla f(K)}{\norm{\nabla f(K)}}$, we have 
from \eqref{ineq_local_smoothness} that
\begin{align*}
f^* \leq f(K') \leq f(K) - r_{\Delta} \norm{\nabla f(K)} + \frac{L_{\Delta}r_{\Delta}^2}{2},
\end{align*}
which implies $ \norm{\nabla f(K)} \leq \frac{1}{r_{\Delta}} \rbr{\Delta + \frac{L_{\Delta} r_{\Delta}^2}{2} } \leq L_{\Delta}r_{\Delta}$. 
Hence one can apply \eqref{ineq_local_smoothness} again with $K' = K - \nabla f(K) / L_{\Delta}$, which yields the second inequality of \eqref{ineq_gradient_gap_bounds}.
\end{proof}

It should be noted that Lemma \ref{lemma_lqr_landscape} also holds for the discounted-cost \cite{malik2020derivative} and average-cost settings \cite{yang2019global, ju2025model} with potential stochastic state transitions, and hence our subsequent analysis in Section \ref{sec_convergence} also applies to these two settings. 
In view of Lemma \ref{lemma_lqr_landscape}, for any policy $K$ with $\Delta(K) \leq \Delta$, we must have 
\begin{align}\label{bd_gd_via_opt_gap}
 \norm{\nabla f(K)}\leq G_\Delta \coloneqq2\sqrt{L_{\Delta}\Delta}.
\end{align}
Condition \ref{condition_noise}, together with Lemma \ref{lemma_lqr_landscape}, allows us to treat the LQR problem as a general black-box optimization problem.
In fact, our subsequent analysis in Section \ref{sec_convergence} requires nothing about the specifics of the LQR problem.
Hence it is perhaps reasonable to expect that the analytical framework we take in Section \ref{sec_convergence} can be  extended to a more general setting, as long as the stochastic gradient oracle satisfies  Condition \ref{condition_noise} and the objective function satisfies properties discussed in Lemma \ref{lemma_lqr_landscape}.

%% file: convergence.tex


\section{Convergence Analysis}\label{sec_convergence}

We start by noting that in view of Condition \ref{condition_noise}, to ensure that the stochastic gradient $G_t$ remains bounded throughout the optimization process, one needs to in turn control the optimality gap $\Delta(K_t)$. 
On the other hand, the optimality gap of the later policies would clearly depend on $G_t$.
In turn this suggests somewhat an induction approach to the convergence analysis. 
Nevertheless, it is clear that with our goal of taking only $\tilde{\cO}(1)$ interactions for the construction of $G_t$, 
it would be difficult to maintain such a high-probability certificate if we were to show that $\Delta(K_{t+1})$ remains bounded with high probability simply from the fact that $\Delta(K_t)$ is bounded with high probability and that we have an approximate descent at iteration $t$. 
In principle, $\tilde{\cO}(1)$ interactions for gradient evaluation typically ensure at best approximate descent in expectation, instead of with high probability.  

To proceed, our basic idea is to introduce the following auxiliary sequence $\cbr{Y_t}$, which in some sense generalizes the observation underlying Lemma 5.4 of \cite{li2025policy}.\footnote{
It is well known that the summation of $T$ independent standard Gaussian random variables is of order $\cO(\sqrt{T})$ with high probability.
Now suppose in round $t+1$ of summation, the random variable to be summed is Gaussian conditioned on the event that the partial sum of the first $t$ rounds is bounded by $C \sqrt{t}$ for some constant $C$, and $\infty$ otherwise. Lemma 5.4 in \cite{li2025policy} shows that the total sum is still of order $\cO(\sqrt{T})$ provided $C$ is large enough. 
}
In essence, since the behavior of $\Delta(K_t)$ depends on the quality of $\cbr{G_i}_{i < t}$, we will let $E_t$ denote the event of a bounded optimality gap for all iterations up to $t$, such that one can readily control $Y_t \coloneqq \Delta(K_t) \mathbbm{1}_{E_t}$.
Hence the remaining  goal is to show that $\Delta(K_t) = Y_t$ with high probability, i.e.,  the indicator $\mathbbm{1}_{E_t}$ does not really matter in terms of our probabilistic statement. 

To make our above observation precise, let us consider choosing $M, \Delta$ and $\cbr{\Delta_t}$ such that  
\begin{align}\label{basic_requirement_delta_seq}
M > 0, ~  \Delta(K_0) \leq \min \cbr{\Delta, \Delta_0},  ~
 0 \leq \Delta_t \leq \Delta,  ~ t \geq 0, 
 \end{align}
 and accordingly define 
\begin{align}
 E_t&=\cap_{i=0}^t\cbr{\Delta(K_i)\leq \Delta_i}
       \cap\cap_{i=0}^{t-1}\cbr{\norm{G_i}\leq M},  \label{def_good_event}\\
 \delta_t&=\nabla f(K_t)-G_t,\nonumber\\
 Y_0&=\Delta(K_0),~ Y_{t+1}=
 \Delta(K_{t+1}) \mathbbm{1}_{E_t\cap\cbr{\norm{G_t}\leq M}}. \label{def_auxiliary_gap}
\end{align}
We make the following immediate observation regarding the auxiliary sequence $\cbr{Y_t}$.

\begin{lemma}\label{lemma_aux_seq}
Suppose $0<\eta_t\leq 1/\mu$ and $\eta_tM\leq r_{\Delta}$ for some $M > 0$.
Then we have 
\begin{align}
 Y_{t+1}
 &\leq (1-\mu\eta_t)Y_t
       +\eta_t\inner{\nabla f(K_t)}
          {\delta_t\mathbbm{1}_{E_t\cap\cbr{\norm{G_t}\leq M}}}
       +\frac{L_{\Delta}}{2}\eta_t^2M^2.
       \label{ineq_single_step_gap_recursion}
\end{align}
\end{lemma}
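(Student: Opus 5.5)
The plan is to split on the event $F_t \coloneqq E_t\cap\cbr{\norm{G_t}\leq M}$. On its complement $F_t^c$ the left-hand side $Y_{t+1}$ is zero, so only the right-hand side needs checking there. On $F_t$ we use the local smoothness from Lemma \ref{lemma_lqr_landscape} at $K_t$, followed by the PL inequality.

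\emph{Case $F_t^c$.} Here $Y_{t+1}=0$ and the inner-product term on the right vanishes because of the indicator. The right-hand side is therefore $(1-\mu\eta_t)Y_t+\frac{L_{\Delta}}{2}\eta_t^2M^2$. Both terms are nonnegative, since $\eta_t\leq 1/\mu$ and $Y_t\geq 0$. Indeed, $Y_t$ is either $\Delta(K_0)\geq 0$ or an optimality gap times an indicator; it is finite because on the relevant event the gap is at most $\Delta_t\leq\Delta$, and the indicator is zero otherwise (with the convention $\infty\cdot 0=0$). So the inequality holds trivially.

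\emph{Case $F_t$.} On $E_t$ we have $\Delta(K_t)\leq\Delta_t\leq\Delta$. One technical point is that Lemma \ref{lemma_lqr_landscape} is stated for $\Delta(K)<\Delta$. I would either note that the lemma's constants can be taken for any slightly larger level, or apply it with $\Delta$ replaced by $\Delta+\epsilon$ and absorb the change into the definition of $L_\Delta, r_\Delta$; I expect this to be the only fiddly step.

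Next, $K_{t+1}=K_t-\eta_tG_t$ satisfies $\norm{K_{t+1}-K_t}=\eta_t\norm{G_t}\leq\eta_tM\leq r_\Delta$. Hence $K_{t+1}$ is stable and \eqref{ineq_local_smoothness} applies:
\begin{align*}
f(K_{t+1})\leq f(K_t)-\eta_t\inner{\nabla f(K_t)}{G_t}+\frac{L_\Delta}{2}\eta_t^2\norm{G_t}^2 .
\end{align*}
Substituting $G_t=\nabla f(K_t)-\delta_t$ gives $-\eta_t\norm{\nabla f(K_t)}^2+\eta_t\inner{\nabla f(K_t)}{\delta_t}$. We then bound $\norm{G_t}^2\leq M^2$ and apply the PL inequality $\norm{\nabla f(K_t)}^2\geq\mu\Delta(K_t)$ from \eqref{ineq_gradient_gap_bounds}. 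Subtracting $f^*$ yields
\begin{align*}
\Delta(K_{t+1})\leq(1-\mu\eta_t)\Delta(K_t)+\eta_t\inner{\nabla f(K_t)}{\delta_t}+\frac{L_\Delta}{2}\eta_t^2M^2 .
\end{align*}

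It remains to identify each term with the quantities in \eqref{ineq_single_step_gap_recursion} on $F_t$. The left side is $\Delta(K_{t+1})=Y_{t+1}$, and $\delta_t\mathbbm{1}_{F_t}=\delta_t$. For $\Delta(K_t)=Y_t$: when $t=0$ this holds by definition. When $t\geq 1$, $Y_t=\Delta(K_t)\mathbbm{1}_{E_{t-1}\cap\cbr{\norm{G_{t-1}}\leq M}}$, and the event $E_{t-1}\cap\cbr{\norm{G_{t-1}}\leq M}$ contains $E_t$ by \eqref{def_good_event}, so the indicator equals one on $F_t$. Combining the two cases gives \eqref{ineq_single_step_gap_recursion}.
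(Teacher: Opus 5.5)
Your proof is correct and follows essentially the paper's argument. Both use local smoothness plus the PL bound on $E_t\cap\{\|G_t\|\le M\}$, use the nesting $E_t\subseteq E_{t-1}\cap\{\|G_{t-1}\|\le M\}$ to identify $\Delta(K_t)$ with $Y_t$, and use $Y_0=\Delta(K_0)$ for $t=0$; your case split just replaces the paper's step of multiplying through by the indicator.

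There is one small slip. Your stated reason for $Y_t<\infty$ is not right: for $t\ge 1$ the event in $Y_t$ only guarantees $\Delta(K_{t-1})\le\Delta_{t-1}$, not $\Delta(K_t)\le\Delta_t$. Finiteness is not needed anyway, since $Y_t\ge 0$ and $1-\mu\eta_t\ge 0$ already make the right-hand side nonnegative on the complementary event.
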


\begin{proof}
On the event of $E_t\cap\cbr{\norm{G_t}\leq M}$, 
since $\Delta(K_t) \leq \Delta_t \leq \Delta$ and $\norm{K_{t+1} - K_t} = \eta_t \norm{G_t} \leq \eta_tM \leq r_{\Delta}$,
one can invoke \eqref{ineq_local_smoothness} in Lemma \ref{lemma_lqr_landscape} and obtain 
\begin{align*}
 f(K_{t+1})
 &\leq f(K_t)+\inner{\nabla f(K_t)}{K_{t+1}-K_t}
              +\frac{L_{\Delta}}{2}\norm{K_{t+1}-K_t}^2\\
 &=f(K_t)-\eta_t\inner{\nabla f(K_t)}{G_t}
              +\frac{L_{\Delta}}{2}\eta_t^2\norm{G_t}^2\\
 &=f(K_t)-\eta_t\norm{\nabla f(K_t)}^2
              +\eta_t\inner{\nabla f(K_t)}{\delta_t}
              +\frac{L_{\Delta}}{2}\eta_t^2\norm{G_t}^2.
\end{align*}
Applying the first inequality in \eqref{ineq_gradient_gap_bounds}, we have 
\begin{align*}
 \Delta(K_{t+1})
 \leq (1-\mu\eta_t)\Delta(K_t)
       +\eta_t\inner{\nabla f(K_t)}{\delta_t}
       +\frac{L_{\Delta}}{2}\eta_t^2\norm{G_t}^2.
\end{align*}
Consequently, for any $t \geq 1$, multiplying both sides by $\mathbbm{1}_{E_t\cap\cbr{\norm{G_t}\leq M}}$ and using the definition of $Y_t$ yields  
\begin{align}
 Y_{t+1}
 &\leq (1-\mu\eta_t)\Delta(K_t)
       \mathbbm{1}_{E_t\cap\cbr{\norm{G_t}\leq M}}  +\eta_t\inner{\nabla f(K_t)}
       {\delta_t \mathbbm{1}_{E_t\cap\cbr{\norm{G_t}\leq M}}}
       +\frac{L_{\Delta}}{2}\eta_t^2\norm{G_t}^2 \mathbbm{1}_{E_t\cap\cbr{\norm{G_t}\leq M}} \label{ineq_recusion_y_raw}  \\
  & \overset{(a)}{\leq} (1-\mu\eta_t)\Delta(K_t)
       \mathbbm{1}_{E_{t-1} \cap\cbr{\norm{G_{t-1}}\leq M}}
        +\eta_t\inner{\nabla f(K_t)}
       {\delta_t\mathbbm{1}_{E_t\cap\cbr{\norm{G_t}\leq M}}}
       +\frac{L_{\Delta}}{2}\eta_t^2 M^2  \nonumber \\
 & \leq (1-\mu\eta_t)Y_{t}
        +\eta_t\inner{\nabla f(K_t)}
       {\delta_t\mathbbm{1}_{E_t\cap\cbr{\norm{G_t}\leq M}}}
       +\frac{L_{\Delta}}{2}\eta_t^2 M^2,  \nonumber 
\end{align}
where $(a)$ follows from the fact that 
$
 E_t\cap\cbr{\norm{G_t}\leq M}
 \subseteq E_{t-1}\cap\cbr{\norm{G_{t-1}}\leq M}
$
and $\Delta(K_t) \geq 0$.
Hence \eqref{ineq_single_step_gap_recursion} holds for any $t \geq 1$. 
On the other hand, for $t = 0$, 
it is clear that 
\eqref{ineq_recusion_y_raw} still holds.
Combining this observation with the fact that 
$
(1-\mu\eta_0)\Delta(K_0)
       \mathbbm{1}_{E_0 \cap\cbr{\norm{G_0 }\leq M}}  \leq (1-\mu\eta_0)\Delta(K_0) = (1-\mu \eta_0) Y_0,
$
we conclude that \eqref{ineq_single_step_gap_recursion} holds at $t = 0$. 
\end{proof}

In view of Lemma \ref{lemma_aux_seq} and Condition \ref{condition_noise}, one can readily employ the basic Azuma inequality to control the auxiliary sequence $\cbr{Y_t}$ with high probability.
Whenever this is the case, we next  show that this in turn would lead to a control over $\cbr{\Delta(K_t)}$. 

\begin{lemma}\label{lemma_recovery_of_actual_gap}
Let $\Omega$ denote the sample space associated with $T$ iterations of SPG.
Suppose the random outcome $\omega$ satisfies\footnote{
For notational simplicity, we omit the explicit dependence of relevant random variables evaluated at $\omega$ within the statement here. 
} 
\begin{align}\label{restriction_omega}
Y_t \leq \Delta_t, ~ 0 \leq t < T; ~~~ \norm{G_t} \leq M ~ \text{if}~ \omega \in E_t, ~ 0 \leq t < T.
\end{align}
  Then for such $\omega$, we have
\begin{align*}
 \Delta(K_t) = Y_t, ~ \norm{G_{t-1}} \leq M, ~ 0\leq t\leq T,
\end{align*}
where we define $G_{-1} = 0$. 
\end{lemma}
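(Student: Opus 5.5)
We argue by induction on $t$, proving the stronger claim that for every $0 \leq t \leq T$ the outcome $\omega$ lies in $E_t$ (for $t \leq T-1$; at $t=T$ we only need the gap statement), that $\Delta(K_t) = Y_t$, and that $\norm{G_{t-1}} \leq M$. Throughout, $\omega$ is fixed and satisfies \eqref{restriction_omega}.

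\emph{Base case $t=0$.} By definition $Y_0 = \Delta(K_0)$ and $G_{-1} = 0$, so $\norm{G_{-1}} = 0 \leq M$. By \eqref{basic_requirement_delta_seq} we have $\Delta(K_0) \leq \Delta_0$. Since $E_0 = \cbr{\Delta(K_0) \leq \Delta_0}$ (the intersection over $G_i$ is empty), this gives $\omega \in E_0$.

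\emph{Inductive step.} Suppose that for some $t < T$ we have $\omega \in E_t$ and $\Delta(K_t) = Y_t$. The second part of \eqref{restriction_omega} applies because $\omega \in E_t$, and it gives $\norm{G_t} \leq M$. Hence $\omega \in E_t \cap \cbr{\norm{G_t} \leq M}$, the indicator in \eqref{def_auxiliary_gap} equals one, and $Y_{t+1} = \Delta(K_{t+1})$. This settles both equalities at index $t+1$.

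It remains to show $\omega \in E_{t+1}$ when $t+1 < T$. By \eqref{def_good_event},
\[
E_{t+1} = E_t \cap \cbr{\norm{G_t} \leq M} \cap \cbr{\Delta(K_{t+1}) \leq \Delta_{t+1}}.
\]
The first two events hold by the above. For the third, the first part of \eqref{restriction_omega} at index $t+1 < T$ gives $\Delta(K_{t+1}) = Y_{t+1} \leq \Delta_{t+1}$, which closes the induction.

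At the terminal index $t+1 = T$ we do not need membership in $E_T$: the conclusion only requires $\Delta(K_T) = Y_T$ and $\norm{G_{T-1}} \leq M$, both of which the inductive step already provides.

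The only real subtlety is ordering the logic correctly. The bound $\norm{G_t} \leq M$ may be invoked only after $\omega \in E_t$ has been established, and $Y_t \leq \Delta_t$ upgrades to a bound on the true gap only after $\Delta(K_t) = Y_t$ is known. This is why membership in $E_t$ is carried as part of the inductive hypothesis. We should also check that $\Delta(K_{t+1})$ is a well-defined extended real even when $K_{t+1}$ is unstable, so that the equality $Y_{t+1} = \Delta(K_{t+1})$ makes sense. Since the indicator equals one on this event, this is immediate.
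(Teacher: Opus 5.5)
Your proof is correct and is essentially the paper's induction. The paper's hypothesis is $\Delta(K_i)=Y_i$ for $i\le t$ and $\|G_i\|\le M$ for $i\le t-1$, and it then reconstructs $\mathbbm{1}_{E_t}=1$ from $Y_i\le\Delta_i$ before invoking the conditional bound on $G_t$; you carry $\omega\in E_t$ directly in the hypothesis, which is the same information packaged slightly more explicitly.
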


\begin{proof}
Clearly the desired claim holds at $t=0$ given the definition of $Y_0 = \Delta(K_0)$. Suppose the claim holds at $t$.
That is, suppose for such $\omega$ we have $ \Delta(K_i) = Y_i$ for any $i \leq t$
and $\norm{G_i} \leq M$ for $i \leq t-1$. 
 Then for the considered $\omega$, we have 
\begin{align*}
\Delta(K_{t+1}) & \overset{(a)}{=} \Delta(K_{t+1}) \mathbbm{1}_{\cbr{Y_i \leq \Delta_i, i \leq t, \norm{G_i} \leq M, i \leq t-1}}  \\
& \overset{(b)}{=} \Delta(K_{t+1}) \mathbbm{1}_{\cbr{\Delta(K_i) \leq \Delta_i, i \leq t, \norm{G_i} \leq M, i \leq t-1 }} \\
& \overset{(c)}{=}  \Delta(K_{t+1}) \mathbbm{1}_{E_t}  \overset{(d)}{=}  \Delta(K_{t+1}) \mathbbm{1}_{E_t \cap \cbr{\norm{G_t} \leq M}} \overset{(e)}{=} Y_{t+1},
\end{align*}
where $(a)$ follows from the induction hypothesis that $\norm{G_i}\leq M$ for $i \leq t-1$,  and the requirement of $Y_i \leq \Delta_i$ for $i \leq t$ in \eqref{restriction_omega}, 
$(b)$ applies the induction hypothesis that $Y_i = \Delta(K_i)$ for $i \leq t$, 
$(c)$ applies the definition of $E_t$, 
and $(d)$ follows from the requirement of $\norm{G_t} \leq M$ provided $\omega \in E_t$, as indicated in \eqref{restriction_omega}.
Finally, $(e)$ follows directly from the definition of $Y_{t+1}$. 
\end{proof}

Lemma \ref{lemma_recovery_of_actual_gap} can be viewed as a generalization of the induction step presented in Lemma 5.4 of \cite{li2025policy}.
As will be shown in the subsequent Sections \ref{sec_sqrt_rate} and \ref{sec_sublinear_rate}, one can readily control the probability of the first requirement $\cbr{Y_t \leq \Delta_t, 0 \leq t < T}$ in \eqref{restriction_omega}.
We next present the following simple observation controlling the probability of the second part of the requirement in \eqref{restriction_omega}.

\begin{lemma}\label{lemma_union_prob_restriction}
Suppose Condition \ref{condition_noise} holds. We have
\begin{align} \label{prob_fail_cond_union}
\PP \cbr{
\norm{G_t} \leq M_{\Delta, \delta} ~ \text{if}~ \omega \in E_t, 0 \leq t <T
} 
\geq 
1 - T\delta . 
\end{align}
\end{lemma}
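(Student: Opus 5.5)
We will bound the complementary event with a union bound over $t$ and handle each term by conditioning on $\cF_t$. The complement of the event in \eqref{prob_fail_cond_union} is the union over $0 \le t < T$ of the events
\[
B_t \coloneqq E_t \cap \cbr{\norm{G_t} > M_{\Delta,\delta}} .
\]
Here $E_t$ is defined through \eqref{def_good_event} with $M = M_{\Delta,\delta}$. By the union bound it therefore suffices to show $\PP(B_t) \le \delta$ for each $t$.

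The key observation is that $E_t$ is determined before $G_t$ is drawn. Indeed, $E_t$ involves only $\Delta(K_0),\dots,\Delta(K_t)$ and $G_0,\dots,G_{t-1}$, and $K_t$ is a deterministic function of $K_0$ and $G_0,\dots,G_{t-1}$ through \eqref{eq_spg_update}. Hence $\mathbbm{1}_{E_t}$ is $\cF_t$-measurable. Conditioning on $\cF_t$ then gives
\[
\PP(B_t) = \EE\sbr{\mathbbm{1}_{E_t}\, \PP\rbr{\norm{G_t} > M_{\Delta,\delta} \mid \cF_t}} .
\]

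On $E_t$ we have $\Delta(K_t) \le \Delta_t \le \Delta$ by \eqref{basic_requirement_delta_seq}, so \eqref{condition_bound_grad} of Condition \ref{condition_noise} applies. It gives $\PP(\norm{G_t} > M_{\Delta,\delta} \mid \cF_t) \le \delta$ on $E_t$, and therefore $\PP(B_t) \le \delta\, \PP(E_t) \le \delta$. Summing over $t = 0,\dots,T-1$ yields a failure probability of at most $T\delta$, which is the claim.

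The only delicate point is the measurability step: we must verify that $E_t \in \cF_t$, given that $\cF_t$ is defined as the filtration up to iteration $t$ excluding $t$. This holds because $K_t$ is fixed before the oracle is queried at iteration $t$. The remaining subtlety is that Condition \ref{condition_noise} is stated "for any $K_t$ with $\Delta(K_t)\le\Delta$". We read this as a conditional statement that holds almost surely on the $\cF_t$-measurable event $\cbr{\Delta(K_t)\le\Delta}$, and this event contains $E_t$.
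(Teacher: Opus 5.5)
Your proof is correct and follows the same route as the paper: a union bound over the events $E_t \cap \cbr{\norm{G_t} > M_{\Delta,\delta}}$, with each term controlled by \eqref{condition_bound_grad} because $\Delta(K_t) \leq \Delta_t \leq \Delta$ on $E_t$. Your explicit check that $E_t$ is $\cF_t$-measurable gives $\PP(B_t) \leq \delta\,\PP(E_t)$, and this makes rigorous the step the paper writes informally as conditioning on $E_t$.
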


\begin{proof}
One can readily see that the set of random outcomes that do not satisfy the second part of the requirement in \eqref{restriction_omega} is given by 
$\cup_{0 \leq t <T}
E_t \cap \cbr{\norm{G_t} > M} 
$,
and hence 
\begin{align*}
\PP \cbr{
\norm{G_t} \leq M ~ \text{if}~ \omega \in E_t, 0 \leq t <T
} 
\geq 
1 - \tsum_{t<T} \PP \rbr{ \norm{G_t} > M | E_t  }.
\end{align*}
Since $\Delta_t \leq \Delta$, from the definition of $E_t$ it is clear that one could invoke \eqref{condition_bound_grad} in Condition \ref{condition_noise} to control the above probability with $M = M_{\Delta, \delta}$, which leads to 
the desired claim.
\end{proof}

Before we proceed to the convergence analysis of SPG, let us first recall the following slight modification of Azuma's inequality in view of Condition \ref{condition_noise}.

\begin{lemma}\label{lemma_biased_azuma}
Let $\cbr{\cF_t}$ be a filtration,  $Z_t$ be $\cF_{t+1}$-measurable, and $\theta_t$ be $\cF_t$-measurable. 
Suppose there exist $M > 0$ and $d_t > 0$ such that 
$\abs{Z_t} \leq M$ and $\abs{\theta_t} \leq d_t$. 
Define
$
 S_t=\tsum_{i < t } {\theta_i}{Z_i}.
$
Then for any $\delta\in(0,1)$, with probability at least $1-T\delta$,
we have
\begin{align*}
 S_t
 &\leq \tsum_{i<t} {\theta_i}{\EE[Z_i| \cF_i]}
       + 4 M \sqrt{\log(1/\delta)\tsum_{i<t}d_i^2},
\end{align*}
for $1\leq t \leq T$.
\end{lemma}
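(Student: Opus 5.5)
The plan is to reduce to the standard Azuma--Hoeffding inequality for bounded martingale differences, followed by a union bound over $t$. First, I will define the centered increments
\[
X_i = \theta_i\rbr{Z_i - \EE[Z_i|\cF_i]}, \quad i \geq 0.
\]
Since $\theta_i$ is $\cF_i$-measurable and $Z_i$ is $\cF_{i+1}$-measurable, each $X_i$ is $\cF_{i+1}$-measurable. It also satisfies $\EE[X_i|\cF_i] = \theta_i \EE[Z_i - \EE[Z_i|\cF_i]\,|\,\cF_i] = 0$, so $\{X_i\}$ is a martingale difference sequence with respect to $\{\cF_{i+1}\}$. Moreover $\abs{Z_i - \EE[Z_i|\cF_i]} \leq 2M$, so $\abs{X_i} \leq 2M d_i$ almost surely. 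By construction,
\[
S_t - \tsum_{i<t}\theta_i \EE[Z_i|\cF_i] = \tsum_{i<t} X_i,
\]
so the claim is a one-sided tail bound on $\tsum_{i<t} X_i$ for each fixed $t$.

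Next, for a fixed $t \in \{1,\dots,T\}$, I will apply Azuma--Hoeffding. Conditional Hoeffding's lemma gives $\EE[e^{\lambda X_i}|\cF_i] \leq \exp(\lambda^2 (2Md_i)^2/2)$. Iterating the tower property then gives $\EE[e^{\lambda \sum_{i<t} X_i}] \leq \exp(2\lambda^2 M^2 \tsum_{i<t} d_i^2)$. The $d_i$ are deterministic constants, which is what makes this step clean. Chernoff's bound with the optimal $\lambda$ yields
\[
\PP\rbr{\tsum_{i<t} X_i > s} \leq \exp\rbr{-\frac{s^2}{8M^2 \sum_{i<t} d_i^2}}.
\]
Setting $s = 4M\sqrt{\log(1/\delta)\sum_{i<t} d_i^2}$ makes the right-hand side $\exp(-2\log(1/\delta)) = \delta^2 \leq \delta$. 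The constant $4$ therefore has slack; any constant at least $2\sqrt{2}$ suffices. If some $d_i$ bounds are loose or $\sum_{i<t} d_i^2 = 0$, the statement is trivial, since then $X_i = 0$ a.s.

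Finally, a union bound over $t = 1,\dots,T$ shows that all $T$ inequalities hold simultaneously with probability at least $1 - T\delta$, which is the claim.

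The only step needing care is the measurability bookkeeping. The centered increment must be a genuine martingale difference with respect to the shifted filtration $\{\cF_{i+1}\}$, which uses that $\theta_i$ is predictable, so it can be pulled out of the conditional expectation. The conditional Hoeffding lemma also needs the range of $X_i$ given $\cF_i$ to lie in an interval of length $4Md_i$, which holds because $\theta_i$ is fixed given $\cF_i$ and $Z_i \in [-M, M]$. Everything else is routine.
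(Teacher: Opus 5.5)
Your proof is correct and matches the argument the paper intends; the paper states this lemma without proof, as a ``slight modification of Azuma's inequality''. You center the increments into a martingale difference sequence bounded by $2Md_i$, apply Azuma--Hoeffding for each fixed $t$ (the constant $4$ even yields failure probability $\delta^2\le\delta$), and take a union bound over $t=1,\dots,T$.
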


\subsection{SPG with $O(1/\sqrt{T})$ Rate}\label{sec_sqrt_rate}

We now proceed to first establish that SPG converges at the rate of $\cO(1/\sqrt{T})$ with high probability.
To this end, we begin by establishing the following basic recursion on the convergence of $\cbr{Y_t}$.  

\begin{lemma}\label{lemma_weighted_gap_recursion}
Suppose the stepsize $\cbr{\eta_t}$ satisfies 
\begin{align}\label{cond_stepsize_sqrt}
\eta_t \leq \frac{1}{\mu}, ~~  \eta_t M \leq r_{\Delta}.
\end{align}
Define $\Lambda_0 = 1$ and 
$
 \Lambda_{t+1}=\frac{\Lambda_t}{1-\mu\eta_t}.
$
Then we have
 \begin{align}
 Y_t  \leq \frac{1}{\Lambda_t} \rbr{ \Delta(K_0)+
       \tsum_{i<t}\Lambda_{i+1}\eta_i
       \inner{\nabla f(K_i)}{\delta_i\mathbbm{1}_{E_i\cap\cbr{\norm{G_i}\leq M}}}
      +\frac{L_{\Delta}M^2}{2}\tsum_{i<t}\Lambda_{i+1}\eta_i^2 } 
       \label{ineq_weighted_gap_recursion}
\end{align}
for $0 \leq t \leq T$. 
\end{lemma}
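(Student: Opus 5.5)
The plan is to unroll the one-step recursion of Lemma \ref{lemma_aux_seq} after multiplying by the weights $\Lambda_{t+1}$. First, the stepsize condition \eqref{cond_stepsize_sqrt} is exactly the hypothesis of Lemma \ref{lemma_aux_seq}, so \eqref{ineq_single_step_gap_recursion} holds for every $0 \leq t < T$. Since $\eta_t \leq 1/\mu$, we have $1-\mu\eta_t \in [0,1)$. Assume first that $\mu\eta_t<1$ strictly, so that $\Lambda_{t+1}$ is well defined, positive, and satisfies $\Lambda_{t+1}(1-\mu\eta_t) = \Lambda_t$. Multiplying \eqref{ineq_single_step_gap_recursion} by $\Lambda_{t+1} > 0$ gives
\begin{align*}
\Lambda_{t+1} Y_{t+1} \leq \Lambda_t Y_t + \Lambda_{t+1}\eta_t\inner{\nabla f(K_t)}{\delta_t\mathbbm{1}_{E_t\cap\cbr{\norm{G_t}\leq M}}} + \frac{L_{\Delta}M^2}{2}\Lambda_{t+1}\eta_t^2 .
\end{align*}

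Next, I would telescope this inequality over $i = 0,\dots,t-1$. Using $\Lambda_0 Y_0 = Y_0 = \Delta(K_0)$, this yields
\begin{align*}
\Lambda_t Y_t \leq \Delta(K_0) + \tsum_{i<t}\Lambda_{i+1}\eta_i \inner{\nabla f(K_i)}{\delta_i\mathbbm{1}_{E_i\cap\cbr{\norm{G_i}\leq M}}} + \frac{L_{\Delta}M^2}{2}\tsum_{i<t}\Lambda_{i+1}\eta_i^2 .
\end{align*}
Dividing by $\Lambda_t>0$ gives \eqref{ineq_weighted_gap_recursion}. The case $t=0$ is trivial, since both sides equal $\Delta(K_0)$ with empty sums. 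A simple induction on $t$ formalizes the argument.

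The only real subtlety is the boundary case $\eta_t = 1/\mu$, where $\Lambda_{t+1}$ becomes infinite. I would handle it either by reading the statement as implicitly requiring $\eta_t < 1/\mu$ so that $\Lambda$ is defined, or by noting that the later stepsize choices will satisfy strict inequality. Beyond this, I expect no obstacle. One point to record is that the noise term is not sign-controlled, but this is harmless here because multiplication by a positive scalar preserves the inequality regardless of sign. I also need $Y_t\geq 0$ nowhere in this argument.
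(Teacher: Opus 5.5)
Your proposal is correct and matches the paper's proof: multiply the one-step bound of Lemma~\ref{lemma_aux_seq} by $\Lambda_{t+1}$, use $(1-\mu\eta_t)\Lambda_{t+1}=\Lambda_t$, telescope starting from $\Lambda_0 Y_0=\Delta(K_0)$, and divide by $\Lambda_t$. Your remark that $\Lambda_{t+1}$ is only well defined when $\mu\eta_t<1$ is a fair point the paper leaves implicit, and it is harmless because the stepsizes used later satisfy $\mu\eta_t = 2/(N+t)\leq 1/2$.
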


\begin{proof}
Multiplying both sides of \eqref{ineq_single_step_gap_recursion} in Lemma \ref{lemma_aux_seq} by $\Lambda_{t+1}$, and utilizing 
$(1-\mu \eta_t) \Lambda_{t+1} = \Lambda_t$, 
we obtain 
\begin{align*}
 \Lambda_{t+1}Y_{t+1}
 \leq  \Lambda_tY_t +  \Lambda_{t+1}\eta_t \inner{\nabla f(K_t)}
          {\delta_t \mathbbm{1}_{E_t \cap\cbr{\norm{G_t}\leq M}}} +\frac{L_{\Delta}M^2}{2}\Lambda_{t+1}\eta_t^2.
\end{align*}
The desired claim then follows from taking the telescopic sum of the above relation.
\end{proof}

Before we proceed directly to the convergence of $\cbr{K_t}$, we  establish below  the convergence of $\cbr{Y_t}$ such that $Y_t \leq \Delta_t' \leq \Delta$ for some properly chosen $\Delta_t' $ and $\Delta > 0$.
Indeed, suppose this is true. By taking $\Delta_t \equiv \Delta$ in \eqref{def_good_event}, then from Lemmas \ref{lemma_recovery_of_actual_gap} and \ref{lemma_union_prob_restriction}, it is clear that 
$\Delta(K_t) \leq \Delta_t'$ with high probability.
This also illustrates the basic roles of two sequences $\cbr{\Delta_t}$ and $\cbr{\Delta_t'}$ in our subsequent analysis. 
Namely, one can view $\Delta_t$ as the prescribed optimality gap that $K_t$ needs to attain for the optimality-gap dependent stochastic gradients to behave in a benign way, 
and $\Delta_t'$ is the actual rate of convergence if the former holds true.

\begin{lemma}\label{lemma_azuma_fixed_gap_threshold}
Suppose the stepsize $\cbr{\eta_t}$ satisfies \eqref{cond_stepsize_sqrt} with $M = M_{\Delta, \delta}$ therein, 
and Condition \ref{condition_noise} holds. 
Let ${\Lambda_t}$ be defined as in Lemma \ref{lemma_weighted_gap_recursion}. 
Define $\Delta_0' = \Delta(K_0)$, and 
\begin{align}
\Delta_t' =   \frac{\Delta(K_0)}{\Lambda_t}
   +\frac{G_\Delta\rbr{b+\sqrt{V_\Delta\delta}}}{\mu}
 +\frac{4G_\Delta(M_{\Delta,\delta}+G_\Delta)}{\Lambda_t}
      \sqrt{\log(1/\delta)\tsum_{i<t}\Lambda_{i+1}^2\eta_i^2}
   +\frac{L_{\Delta}M_{\Delta,\delta}^2}{2\Lambda_t}\tsum_{i<t}\Lambda_{i+1}\eta_i^2 , ~  1 \leq t \leq T. 
   \label{def_azuma_gap_envelope_sqrt}
\end{align}
Suppose $\Delta_t' \leq \Delta$ for every $1\leq t\leq T$. Then with probability
at least $1-2T\delta$, we have
\begin{align*}
 \Delta(K_t)\leq \Delta_t',~~ 0\leq t\leq T.
\end{align*}
\end{lemma}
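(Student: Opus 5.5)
We take $\Delta_t \equiv \Delta$ in \eqref{def_good_event} and $M = M_{\Delta,\delta}$. The plan is to show that, with probability at least $1-T\delta$, the weighted noise sum in \eqref{ineq_weighted_gap_recursion} is small enough that $Y_t\le \Delta_t'\le\Delta$ for all $t\le T$. This gives the first requirement in \eqref{restriction_omega}. Lemma \ref{lemma_union_prob_restriction} gives the second requirement with probability at least $1-T\delta$. A union bound then lets us invoke Lemma \ref{lemma_recovery_of_actual_gap}, so that $\Delta(K_t)=Y_t\le\Delta_t'$ with probability at least $1-2T\delta$. The case $t=0$ is trivial since $\Delta_0'=\Delta(K_0)=Y_0$.

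For the noise term we apply Lemma \ref{lemma_biased_azuma} with
$$
\theta_i=\Lambda_{i+1}\eta_i\,\mathbbm{1}_{E_i}\,\nabla f(K_i)\quad\text{(an $\cF_i$-measurable quantity)},\qquad
Z_i=\delta_i\mathbbm{1}_{\{\norm{G_i}\le M\}}.
$$
We read the inner product coordinatewise, or equivalently take the scalar $\theta_i Z_i=\Lambda_{i+1}\eta_i\inner{\mathbbm{1}_{E_i}\nabla f(K_i)/\norm{\cdot}}{Z_i}\norm{\cdot}$; the lemma extends to inner products in the obvious way. The required bounds are as follows. On $E_i$ we have $\Delta(K_i)\le\Delta$, so \eqref{bd_gd_via_opt_gap} gives $\norm{\nabla f(K_i)}\le G_\Delta$, hence $d_i=\Lambda_{i+1}\eta_i G_\Delta$. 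Moreover $\norm{Z_i}\le \norm{\nabla f(K_i)}+\norm{G_i}\mathbbm{1}\le G_\Delta+M$ wherever it is multiplied by a nonzero $\theta_i$. (To make $|Z_i|$ bounded unconditionally, we can put $\mathbbm{1}_{E_i}$ inside $Z_i$ as well, since it is $\cF_i$-measurable.) This produces the term $4G_\Delta(M+G_\Delta)\sqrt{\log(1/\delta)\sum_{i<t}\Lambda_{i+1}^2\eta_i^2}$, simultaneously for all $t\le T$, with probability at least $1-T\delta$.

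The main obstacle is the conditional-mean term $\sum_{i<t}\theta_i\,\EE[Z_i\mid\cF_i]$. Here we need to bound $\norm{\EE[\delta_i\mathbbm{1}_{\{\norm{G_i}\le M\}}\mid\cF_i]}$ on $E_i$ by $b+\sqrt{V_\Delta\delta}$. We write
$$
\delta_i\mathbbm{1}_{\{\norm{G_i}\le M\}}=\delta_i-\delta_i\mathbbm{1}_{\{\norm{G_i}>M\}}.
$$
The first piece has conditional mean of norm at most $b$ by \eqref{condition_bound_bias}. For the second piece, note that $\EE[\nabla f(K_i)\mathbbm{1}_{\{\norm{G_i}>M\}}\mid\cF_i]$ has norm at most $G_\Delta\delta$, while Cauchy–Schwarz with \eqref{condition_bound_moment} and \eqref{condition_bound_grad} gives $\norm{\EE[G_i\mathbbm{1}_{\{\norm{G_i}>M\}}\mid\cF_i]}\le\sqrt{V_\Delta\delta}$. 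The residual $G_\Delta\delta$ must either be absorbed (for instance via $\nabla f=\EE G-\text{bias}$, giving $\norm{\nabla f}^2\le$ something $\le V_\Delta$, so $G_\Delta\delta$ can be replaced by $\sqrt{V_\Delta}\delta\le\sqrt{V_\Delta\delta}$, at the cost of a constant) or treated by a slightly sharper split. I expect this bookkeeping to be the delicate step, and I would aim for the bound $b+\sqrt{V_\Delta\delta}$ up to an absolute constant. Granting it, the conditional-mean contribution is at most $G_\Delta(b+\sqrt{V_\Delta\delta})\sum_{i<t}\Lambda_{i+1}\eta_i$.

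Finally, this sum telescopes. Since $\Lambda_{i+1}\mu\eta_i=\Lambda_{i+1}-\Lambda_i$, we have $\sum_{i<t}\Lambda_{i+1}\eta_i=(\Lambda_t-1)/\mu\le\Lambda_t/\mu$. Dividing by $\Lambda_t$ in \eqref{ineq_weighted_gap_recursion} yields exactly the second term of \eqref{def_azuma_gap_envelope_sqrt}. Collecting the terms gives $Y_t\le\Delta_t'$ for all $1\le t\le T$ on the Azuma event, and the hypothesis $\Delta_t'\le\Delta=\Delta_t$ gives the first requirement of \eqref{restriction_omega}. This completes the argument as outlined above.
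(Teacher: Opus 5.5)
Your architecture matches the paper's: $\Delta_t\equiv\Delta$ and $M=M_{\Delta,\delta}$, Lemma \ref{lemma_biased_azuma} for the weighted noise sum, the telescoping bound $\tsum_{i<t}\Lambda_{i+1}\eta_i\le\Lambda_t/\mu$, then Lemma \ref{lemma_union_prob_restriction}, a union bound and Lemma \ref{lemma_recovery_of_actual_gap}. Your vector-to-scalar reduction is legitimate and gives the same term $4G_\Delta(M_{\Delta,\delta}+G_\Delta)\sqrt{\log(1/\delta)\tsum_{i<t}\Lambda_{i+1}^2\eta_i^2}$.

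The gap is the step you flagged and then ``granted''. You never show that the conditional mean is controlled by $G_\Delta(b+\sqrt{V_\Delta\delta})$ with constant one, yet you conclude that you recover \emph{exactly} the second term of \eqref{def_azuma_gap_envelope_sqrt}. Your split bounds $\norm{\EE[\delta_i\mathbbm{1}_{\{\norm{G_i}\le M\}}\mid\cF_i]}$ by $b+\sqrt{V_\Delta\delta}+\norm{\nabla f(K_i)}\delta$. Your absorption $\norm{\nabla f(K_i)}\le\sqrt{V_\Delta}+b$ only turns this into $b(1+\delta)+2\sqrt{V_\Delta\delta}$. So what you actually prove is the lemma with roughly $2G_\Delta(b+\sqrt{V_\Delta\delta})/\mu$ in place of the stated term. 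This is not cosmetic. The proof of Theorem \ref{thrm_sqrt_rate} uses $G_\Delta(b+\sqrt{V_\Delta\delta})/\mu\le\Delta(K_0)$ to get $\Delta_t'\le 3\Delta(K_0)+\Delta(K_0)=\Delta$, and that step no longer closes with the extra factor $2$.

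The missing idea is small. The paper never takes the norm of the conditional-mean vector. It keeps the inner product with the $\cF_i$-measurable $\nabla f(K_i)$ and uses a sign: on $E_i\cap\{\norm{G_i}>M\}$,
\[
-\inner{\nabla f(K_i)}{\delta_i}=-\norm{\nabla f(K_i)}^2+\inner{\nabla f(K_i)}{G_i}\le G_\Delta\norm{G_i}.
\]
So the residual $\nabla f(K_i)\PP(\norm{G_i}>M\mid\cF_i)$ has a favourable sign and simply drops out. Cauchy--Schwarz with \eqref{condition_bound_moment} and \eqref{condition_bound_grad} then gives exactly $(b+\sqrt{V_\Delta\delta})G_\Delta$.

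If you prefer to stay with your vector route, centre the indicator instead. With $p=\PP(\norm{G_i}>M\mid\cF_i)\le\delta$ and $e=\nabla f(K_i)-\EE[G_i\mid\cF_i]$,
\[
\EE[\delta_i\mathbbm{1}_{\{\norm{G_i}\le M\}}\mid\cF_i]=(1-p)e+\EE\bigl[G_i(\mathbbm{1}_{\{\norm{G_i}>M\}}-p)\mid\cF_i\bigr],
\]
whose norm is at most $b+\sqrt{V_\Delta p(1-p)}\le b+\sqrt{V_\Delta\delta}$ by Cauchy--Schwarz. Either fix closes the gap, and the rest of your proposal then goes through as written.
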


\begin{proof}
Consider choosing $\Delta_t = \Delta$ in \eqref{basic_requirement_delta_seq} and $M = M_{\Delta, \delta}$ in  \eqref{def_good_event}. 
Suppose $\Delta (K_i) \leq \Delta_i = \Delta$. Then in view of \eqref{bd_gd_via_opt_gap}, we have 
 $\norm{\nabla f(K_i)} \leq G_\Delta$, and consequently 
\begin{align}\label{ineq_sqrt_azuma_abs_bd}
 \abs{\inner{\nabla f(K_i)}
       {\delta_i\mathbbm{1}_{E_i\cap\cbr{\norm{G_i}\leq M_{\Delta,\delta}}}}}
 &\leq G_\Delta(M_{\Delta,\delta}+G_\Delta).
\end{align}
In addition, we have 
\begin{align}
 &\EE\sbr{\inner{\nabla f(K_i)}
       {\delta_i\mathbbm{1}_{E_i\cap\cbr{\norm{G_i}\leq M_{\Delta,\delta}}}}| \cF_i}
       \nonumber\\
 = & \EE\sbr{\inner{\nabla f(K_i)}{\delta_i\mathbbm{1}_{E_i}}
       | \cF_i}  -\EE\sbr{\inner{\nabla f(K_i)}
       {\delta_i\mathbbm{1}_{E_i\cap\cbr{\norm{G_i}>M_{\Delta,\delta}}}}| \cF_i}
       \nonumber\\
 \overset{(a)}{\leq} &  bG_\Delta+G_\Delta
       \EE[\norm{G_i}\mathbbm{1}_{E_i\cap\cbr{\norm{G_i}>M_{\Delta,\delta}}}| \cF_i]
       \nonumber\\
 \overset{(b)}{\leq} &  bG_\Delta+G_\Delta
       \sqrt{\EE[\norm{G_i}^2\mathbbm{1}_{E_i}| \cF_i] 
             \PP(E_i\cap\cbr{\norm{G_i}>M_{\Delta,\delta}}| \cF_i)}
       \nonumber\\
 \overset{(c)}{\leq} &  \rbr{b+\sqrt{V_\Delta\delta}}G_\Delta,
 \label{ineq_truncated_inner_product_bias}
\end{align}
where $(a)$ follows from \eqref{condition_bound_bias} in Condition \ref{condition_noise} and the Cauchy-Schwarz inequality, 
and $(b)$ follows again from the Cauchy-Schwarz inequality, 
and $(c)$ follows from \eqref{condition_bound_moment} and \eqref{condition_bound_grad} in Condition \ref{condition_noise}.
Consequently, by combining  Lemma \ref{lemma_biased_azuma} with observations \eqref{ineq_sqrt_azuma_abs_bd} and \eqref{ineq_truncated_inner_product_bias} above,
 we obtain 
that with probability at least $1-T\delta$,
\begin{align*}
 &\tsum_{i<t}\Lambda_{i+1}\eta_i
       \inner{\nabla f(K_i)}
          {\delta_i\mathbbm{1}_{E_i\cap\cbr{\norm{G_i}\leq M_{\Delta,\delta}}}}\\
\leq &  \rbr{b+\sqrt{V_\Delta\delta}}G_\Delta\tsum_{i<t}\Lambda_{i+1}\eta_i
       +4G_\Delta(M_{\Delta,\delta}+G_\Delta)\sqrt{\log(1/\delta)\tsum_{i<t}\Lambda_{i+1}^2\eta_i^2}  \\ 
        \leq  &
   \frac{\Lambda_t G_\Delta\rbr{b+\sqrt{V_\Delta\delta}}}{\mu}
       +4G_\Delta(M_{\Delta,\delta}+G_\Delta)\sqrt{\log(1/\delta)\tsum_{i<t}\Lambda_{i+1}^2\eta_i^2}  
\end{align*}
for any $1\leq t\leq T$,
where the last inequality follows from $ \tsum_{i<t}\Lambda_{i+1}\eta_i
   =\frac{1}{\mu}\tsum_{i<t}(\Lambda_{i+1}-\Lambda_i)
 \leq \frac{\Lambda_t}{\mu}.
$
Combining the above observation with the definition of $\Delta_t'$ in \eqref{def_azuma_gap_envelope_sqrt}, and \eqref{ineq_weighted_gap_recursion} in Lemma \ref{lemma_weighted_gap_recursion} implies $Y_t \leq \Delta_t'$ for all $0 \leq t \leq T$ with probability $1-T\delta$.
Now since $\Delta_t' \leq \Delta$,
we have $Y_t \leq \Delta$.
 In view of Lemma \ref{lemma_recovery_of_actual_gap},
and the fact that  $\norm{G_i} \leq M_{\Delta, \delta}$ with probability at least $1- \delta$ conditioned on $E_i$, 
we obtain 
$
\Delta (K_t) = Y_t \leq \Delta_t' 
$
with probability at least $1-2T\delta$ for $0 \leq t \leq T$.
%
\end{proof}

We are now ready to specify the concrete choice of stepsize for SPG and establish its $\cO(1/\sqrt{T})$ convergence with high probability.

\begin{theorem}\label{thrm_sqrt_rate}
Let $\Delta=4\Delta(K_0)$.
 Suppose Condition \ref{condition_noise} holds and  
$G_\Delta\rbr{b+\sqrt{V_\Delta\delta}}/\mu\leq \Delta(K_0)$. 
Denote 
\begin{align*}
 N =\ceil{\max\cbr{4,\frac{2M_{\Delta,\delta}}{\mu r_{\Delta}},
       \frac{1024G_\Delta^2(M_{\Delta,\delta}+G_\Delta)^2\log(1/\delta)}{\mu^2\Delta(K_0)^2},
       \frac{8L_{\Delta}M_{\Delta,\delta}^2}{\mu^2\Delta(K_0)}}}, ~~~ C =3\Delta(K_0)\sqrt N.
\end{align*}
Set 
$
 \eta_t=\frac{2}{\mu(N+t)}. 
$
Then  with probability at least $1-2T\delta$, we have 
\begin{align}
 \Delta(K_t)\leq \frac{G_\Delta\rbr{b+\sqrt{V_\Delta\delta}}}{\mu}+\frac{C}{\sqrt{N+t}},
 \label{ineq_policy_gap_sqrt_rate}
\end{align}
 for $0\leq t\leq T$.
\end{theorem}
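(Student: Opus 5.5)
The plan is to apply Lemma \ref{lemma_azuma_fixed_gap_threshold} with $\Delta = 4\Delta(K_0)$. Two things need checking: that $\eta_t = \frac{2}{\mu(N+t)}$ satisfies \eqref{cond_stepsize_sqrt} with $M = M_{\Delta,\delta}$, and that the envelope $\Delta_t'$ in \eqref{def_azuma_gap_envelope_sqrt} is bounded by the right-hand side of \eqref{ineq_policy_gap_sqrt_rate}, which is itself at most $\Delta$. The probability $1-2T\delta$ then comes directly from that lemma.

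\textbf{Stepsize conditions.} Since $N \geq 4$, we get $\eta_t \leq \frac{2}{\mu N} \leq \frac{1}{2\mu} \leq \frac{1}{\mu}$. Since $N \geq \frac{2M_{\Delta,\delta}}{\mu r_\Delta}$, we get $\eta_t M_{\Delta,\delta} \leq \frac{2M_{\Delta,\delta}}{\mu N} \leq r_\Delta$.

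\textbf{Computing $\Lambda_t$.} We have $1-\mu\eta_i = \frac{N+i-2}{N+i}$, so the product telescopes:
\[
\Lambda_t = \prod_{i<t}\frac{N+i}{N+i-2} = \frac{(N+t-1)(N+t-2)}{(N-1)(N-2)}.
\]
Hence $\Lambda_t \gtrsim (N+t)^2/N^2$ and $\Lambda_{i+1}\eta_i \asymp \frac{N+i}{\mu N^2}$, with absolute constants; here $N\geq 4$ keeps the ratios $(N+t-2)/(N+t)$ bounded below by $1/2$.

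\textbf{Bounding the four terms of $\Delta_t'$.}
\begin{enumerate}[label=(\roman*)]
\item The first term is $\Delta(K_0)/\Lambda_t \lesssim \Delta(K_0)N^2/(N+t)^2 \leq \Delta(K_0)\sqrt{N}/\sqrt{N+t}$, since $N^2/(N+t)^2 \leq (N/(N+t))^{1/2}$.
\item The bias term $G_\Delta(b+\sqrt{V_\Delta\delta})/\mu$ is kept as is.
\item For the Azuma term, $\sum_{i<t}\Lambda_{i+1}^2\eta_i^2 \lesssim \sum_{i<t}\frac{(N+i)^2}{\mu^2N^4} \lesssim \frac{(N+t)^3}{\mu^2N^4}$. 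Dividing its square root by $\Lambda_t$ gives order $\frac{1}{\mu\sqrt{N+t}}$. Using $N \geq \frac{1024G_\Delta^2(M+G_\Delta)^2\log(1/\delta)}{\mu^2\Delta(K_0)^2}$, i.e.\ $4G_\Delta(M+G_\Delta)\sqrt{\log(1/\delta)}/\mu \leq \Delta(K_0)\sqrt N/8$, this term is at most a constant fraction of $\Delta(K_0)\sqrt N/\sqrt{N+t}$.
\item For the last term, $\sum_{i<t}\Lambda_{i+1}\eta_i^2 \lesssim \sum_{i<t}\frac{1}{\mu^2N^2} = \frac{t}{\mu^2N^2}$. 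Dividing by $\Lambda_t$ gives order $\frac{t}{\mu^2(N+t)^2} \leq \frac{1}{\mu^2(N+t)}$. Multiplying by $L_\Delta M^2/2$ and using $N \geq 8L_\Delta M^2/(\mu^2\Delta(K_0))$ bounds it by a fraction of $\Delta(K_0)N/(N+t) \leq \Delta(K_0)\sqrt N/\sqrt{N+t}$.
\end{enumerate}

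\textbf{Conclusion.} The three non-bias terms add up to at most $3\Delta(K_0)\sqrt N/\sqrt{N+t} = C/\sqrt{N+t}$. Using the hypothesis $G_\Delta(b+\sqrt{V_\Delta\delta})/\mu \leq \Delta(K_0)$, we get $\Delta_t' \leq \Delta(K_0) + 3\Delta(K_0) = 4\Delta(K_0) = \Delta$. So Lemma \ref{lemma_azuma_fixed_gap_threshold} applies and gives \eqref{ineq_policy_gap_sqrt_rate}. At $t=0$, $\Delta(K_0) \leq C/\sqrt N$ holds trivially.

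The main obstacle is the constant bookkeeping in steps (i), (iii) and (iv): each must come out at most $\Delta(K_0)\sqrt N/\sqrt{N+t}$ with constants compatible with the factors $1024$ and $8$ in $N$. The delicate ratios are $(N+i)/(N+i-2)$ and $(N-1)(N-2)/N^2$, which need $N\geq4$. My first approach would be to bound $\Lambda_{i+1}\leq 3\Lambda_t(N+i+1)^2/(N+t)^2$-type comparisons crudely, absorb the resulting factors into the generous constant $1024$, and also use the fact that the first term decays faster than needed.
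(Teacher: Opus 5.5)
Your proposal is correct and follows the paper's proof. You verify \eqref{cond_stepsize_sqrt}, compute $\Lambda_t$ in closed form, bound the Azuma term by $\frac{32G_\Delta(M_{\Delta,\delta}+G_\Delta)\sqrt{\log(1/\delta)}}{\mu\sqrt{N+t}}$ and the last term by $\frac{8L_\Delta M_{\Delta,\delta}^2}{\mu^2(N+t)}$, and use the choice of $N$ to get $\Delta_t'\le 4\Delta(K_0)=\Delta$ before invoking Lemma \ref{lemma_azuma_fixed_gap_threshold}. For the constant bookkeeping you flag, the exact identity $\Lambda_{i+1}\eta_i=\frac{2(N+i-1)}{\mu(N-1)(N-2)}$ is the cleanest route: its denominator cancels against that of $\Lambda_t$ and leaves ample room under the factors $1024$ and $8$.
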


\begin{proof}
Clearly, the choice of $\eta_t$ satisfies \eqref{cond_stepsize_sqrt}. Consequently, 
in view of Lemma \ref{lemma_azuma_fixed_gap_threshold}, it suffices to show that $\Delta_t' \leq \Delta$ for $1\leq t \leq T$,
where $\Delta_t' $ is defined as in \eqref{def_azuma_gap_envelope_sqrt}.
One can readily verify that
$
 \Lambda_t=\frac{(N+t-2)(N+t-1)}{(N-2)(N-1)}
$~and
\begin{align}
 \frac{4G_\Delta(M_{\Delta,\delta}+G_\Delta)}{\Lambda_t}
       \sqrt{\log(1/\delta)\tsum_{i<t}\Lambda_{i+1}^2\eta_i^2}
 &\leq \frac{32G_\Delta(M_{\Delta,\delta}+G_\Delta)\sqrt{\log(1/\delta)}}
              {\mu\sqrt{N+t}}, \nonumber \\
 \frac{L_{\Delta}M_{\Delta,\delta}^2}{2\Lambda_t}\tsum_{i<t}\Lambda_{i+1}\eta_i^2
 &\leq \frac{8L_{\Delta}M_{\Delta,\delta}^2}{\mu^2(N+t)}. \label{sum_second_moment_grad}
\end{align}
Combining the above observations with \eqref{def_azuma_gap_envelope_sqrt}, we obtain
\begin{align}
 \Delta_t'
 &\leq \frac{\Delta(K_0)}{\Lambda_t}+\frac{G_\Delta\rbr{b+\sqrt{V_\Delta\delta}}}{\mu} +\frac{32G_\Delta(M_{\Delta,\delta}+G_\Delta)\sqrt{\log(1/\delta)}}
                    {\mu\sqrt{N+t}}
       +\frac{8L_{\Delta}M_{\Delta,\delta}^2}{\mu^2(N+t)} \label{ineq_sqrt_opt_gap_raw}\\
 &\leq \Delta(K_0)+\frac{G_\Delta\rbr{b+\sqrt{V_\Delta\delta}}}{\mu} +\frac{32G_\Delta(M_{\Delta,\delta}+G_\Delta)\sqrt{\log(1/\delta)}}
                    {\mu\sqrt N}
       +\frac{8L_{\Delta}M_{\Delta,\delta}^2}{\mu^2N} \nonumber \\
 &\leq 4\Delta(K_0) = \Delta, \nonumber 
\end{align}
for $1\leq t\leq T$,
where the last inequality follows from the choice of $N$. 
Hence applying Lemma \ref{lemma_azuma_fixed_gap_threshold} yields that $\Delta(K_t) \leq \Delta_t'$ with probability at least $1- 2 T\delta$. 
The desired claim then follows from  the definition of $\Lambda_t$ and $C$, together with \eqref{ineq_sqrt_opt_gap_raw}.
\end{proof}

In view of Theorem \ref{thrm_sqrt_rate}, we can immediately obtain the following $\cO(1/\epsilon^2)$ iteration complexity of SPG for finding an $\epsilon$-optimal policy.

\begin{corollary}\label{corr_sample_complexity_sqrt}
Suppose Condition \ref{condition_noise} holds. Choose the stepsize $\cbr{\eta_t}$ as in Theorem \ref{thrm_sqrt_rate}. 
For any $0<\epsilon\leq \Delta(K_0)$ and $0<\delta<1$ such that
\begin{align}
 \frac{G_\Delta}{\mu}\rbr{b+\sqrt{V_\Delta\delta}}
 \leq \frac{\epsilon}{2}, \label{condition_bias_accuracy_sqrt}
\end{align}
 with probability at least $1-2T\delta$, we have $f(K_T) - f^* \leq \epsilon$ 
if 
$T\geq {1 + {\frac{4C^2}{\epsilon^2}}}$,
and   $K_t$ is stable for any $0 \leq t \leq T$. 
\end{corollary}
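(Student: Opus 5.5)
The corollary is a direct instantiation of Theorem \ref{thrm_sqrt_rate}, followed by a stability argument. First I check the hypothesis of the theorem. Since $\epsilon \leq \Delta(K_0)$, condition \eqref{condition_bias_accuracy_sqrt} gives
\[
G_\Delta\rbr{b+\sqrt{V_\Delta\delta}}/\mu \leq \epsilon/2 \leq \Delta(K_0),
\]
with $\Delta = 4\Delta(K_0)$ as in the theorem. The theorem therefore applies with the prescribed stepsize $\eta_t = \frac{2}{\mu(N+t)}$. With probability at least $1-2T\delta$, inequality \eqref{ineq_policy_gap_sqrt_rate} then holds simultaneously for all $0 \leq t \leq T$.

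Next I evaluate this bound at $t=T$. The bias term is at most $\epsilon/2$ by \eqref{condition_bias_accuracy_sqrt}. For the second term I need $C/\sqrt{N+T} \leq \epsilon/2$, i.e.\ $N+T \geq 4C^2/\epsilon^2$. This follows from $T \geq 1 + 4C^2/\epsilon^2$, since $N \geq 4 > 0$; the additive $1$ is only slack, for instance to absorb a ceiling. Adding the two terms gives $f(K_T) - f^* = \Delta(K_T) \leq \epsilon$ on the same event.

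For stability, on the same high-probability event \eqref{ineq_policy_gap_sqrt_rate} gives, for every $0 \leq t \leq T$,
\[
\Delta(K_t) \leq \epsilon/2 + C/\sqrt{N} = \epsilon/2 + 3\Delta(K_0) < \infty,
\]
so in fact $\Delta(K_t) \leq 4\Delta(K_0) = \Delta$. As noted in Section \ref{sec_spg}, finiteness of $\Delta(K)$ is equivalent to $\rho(A-BK)<1$, so every $K_t$ is stable. Alternatively, this also follows from Lemma \ref{lemma_lqr_landscape}, since each step has length $\eta_t\norm{G_t} \leq \eta_t M_{\Delta,\delta} \leq r_\Delta$ on the event from Lemma \ref{lemma_recovery_of_actual_gap}.

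There is no real obstacle here. The only care points are verifying the hypothesis of Theorem \ref{thrm_sqrt_rate} from $\epsilon \leq \Delta(K_0)$, and noting that the probability event is uniform over $t$, which is what yields stability of all iterates at once.
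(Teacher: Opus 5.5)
Your proposal is correct and matches the paper's intended argument. The paper offers no proof beyond saying the corollary follows immediately from Theorem \ref{thrm_sqrt_rate}, and you fill it in the natural way: verify the theorem's hypothesis via $\epsilon \leq \Delta(K_0)$, bound the bias term and $C/\sqrt{N+T}$ each by $\epsilon/2$, and get stability of every $K_t$ from the finite gap on the same uniform-in-$t$ event.
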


It should be noted that condition \eqref{condition_bias_accuracy_sqrt} can be readily satisfied if the bias $b$ and $\delta$ associated with the stochastic gradient $G_t$ are sufficiently small. 
In Section \ref{sec_oracle}, we will discuss a detailed construction of $G_t$ that certifies both requirements. In  particular, such a construction requires only two trajectories each with length that depends logarithmically on $\max \cbr{1/b, 1/\delta}$.

\subsection{SPG with $O(1/{T})$ Rate}\label{sec_sublinear_rate}

We now discuss a refinement to our approach in Section \ref{sec_sqrt_rate}, which leads to an improved $\cO(1/T)$ convergence rate for the SPG method. 
The basic observation we utilize is the fact that the upper bound  in \eqref{ineq_sqrt_azuma_abs_bd} used in Azuma's inequality indeed 
depends on the optimality gap $\Delta(K_t)$, which in view of \eqref{bd_gd_via_opt_gap}, 
also diminishes provided that $\cbr{K_t}$ converges to the optimal policy.

\begin{lemma}\label{lemma_azuma_varying_gap_thresholds}
Suppose the stepsize $\cbr{\eta_t}$ satisfies \eqref{cond_stepsize_sqrt}
with $M=M_{\Delta,\delta}$, and Condition \ref{condition_noise} holds.
Let $\Lambda_t$ be defined as in Lemma \ref{lemma_weighted_gap_recursion}.
 Define $\Delta_0'=\Delta(K_0)$, and
\begin{align}
\Delta_t' =   \frac{\Delta(K_0)}{\Lambda_t}
   +\frac{G_\Delta\rbr{b+\sqrt{V_\Delta\delta}}}{\mu}
 +\frac{8(M_{\Delta,\delta}+G_\Delta)}{\Lambda_t}
      \sqrt{L_{\Delta}\log(1/\delta)\tsum_{i<t}\Lambda_{i+1}^2\eta_i^2\Delta_i}
   +\frac{L_{\Delta}M_{\Delta,\delta}^2}{2\Lambda_t}\tsum_{i<t}\Lambda_{i+1}\eta_i^2 , 
   \label{def_azuma_gap_envelope_fast_rate}
\end{align}
for $ 1 \leq t \leq T$.
Suppose $\Delta_t' \leq \Delta_t$ for every $1\leq t\leq T$. Then with probability
at least $1-2T\delta$, we have
\begin{align*}
 \Delta(K_t)\leq \Delta_t',~0\leq t\leq T.
\end{align*}
\end{lemma}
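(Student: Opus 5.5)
The plan is to mirror the proof of Lemma \ref{lemma_azuma_fixed_gap_threshold}, now with a time-varying threshold $\Delta_t$ in \eqref{def_good_event} (with $M=M_{\Delta,\delta}$), where $\{\Delta_t\}$ satisfies \eqref{basic_requirement_delta_seq}. The only change is that the almost-sure bound fed into Lemma \ref{lemma_biased_azuma} is made iterate-dependent. On $E_i$ we have $\Delta(K_i)\leq \Delta_i\leq \Delta$. Lemma \ref{lemma_lqr_landscape} (second inequality of \eqref{ineq_gradient_gap_bounds}, with constant $L_\Delta$) then gives $\norm{\nabla f(K_i)}\leq \sqrt{2L_\Delta \Delta_i}\leq 2\sqrt{L_\Delta\Delta_i}$. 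Hence
\begin{align*}
\abs{\inner{\nabla f(K_i)}{\delta_i\mathbbm{1}_{E_i\cap\cbr{\norm{G_i}\leq M_{\Delta,\delta}}}}} \leq 2\sqrt{L_\Delta\Delta_i}\,(M_{\Delta,\delta}+G_\Delta),
\end{align*}
because $\norm{\delta_i}\leq \norm{G_i}+\norm{\nabla f(K_i)}\leq M_{\Delta,\delta}+G_\Delta$ on this event.

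Next I will apply Lemma \ref{lemma_biased_azuma}. I take $\theta_i=\Lambda_{i+1}\eta_i\cdot 2\sqrt{L_\Delta\Delta_i}$, which is deterministic and hence $\cF_i$-measurable, so that $d_i=\theta_i$. I take
\begin{align*}
Z_i=\inner{\nabla f(K_i)}{\delta_i\mathbbm{1}_{E_i\cap\cbr{\norm{G_i}\leq M}}}\big/\rbr{2\sqrt{L_\Delta\Delta_i}},
\end{align*}
which satisfies $\abs{Z_i}\leq M_{\Delta,\delta}+G_\Delta$. If $\Delta_i=0$, then on $E_i$ the iterate is optimal and the term vanishes, so it can simply be dropped.

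For the conditional mean I reuse \eqref{ineq_truncated_inner_product_bias} with the cruder bound $\norm{\nabla f(K_i)}\leq G_\Delta$. This bounds the mean part by $(b+\sqrt{V_\Delta\delta})G_\Delta\sum_{i<t}\Lambda_{i+1}\eta_i\leq \Lambda_t G_\Delta(b+\sqrt{V_\Delta\delta})/\mu$, and reproduces the second term of \eqref{def_azuma_gap_envelope_fast_rate}. The deviation part is
\begin{align*}
4(M_{\Delta,\delta}+G_\Delta)\sqrt{\log(1/\delta)\tsum_{i<t}4L_\Delta\Lambda_{i+1}^2\eta_i^2\Delta_i},
\end{align*}
which is exactly the third term. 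Plugging these into \eqref{ineq_weighted_gap_recursion} of Lemma \ref{lemma_weighted_gap_recursion} yields $Y_t\leq\Delta_t'$ for all $1\leq t\leq T$ with probability at least $1-T\delta$. The case $t=0$ is trivial.

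Finally, the hypothesis $\Delta_t'\leq\Delta_t$ gives $Y_t\leq\Delta_t$ for all $t$, which is the first requirement of \eqref{restriction_omega}. Lemma \ref{lemma_union_prob_restriction} (valid since $\Delta_t\leq\Delta$) supplies the second requirement with probability at least $1-T\delta$. A union bound and Lemma \ref{lemma_recovery_of_actual_gap} then give $\Delta(K_t)=Y_t\leq\Delta_t'$ with probability at least $1-2T\delta$.

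The main point needing care is the measurability and scaling in the Azuma step. The bound $d_i$ must be $\cF_i$-measurable and valid almost surely, not only on $E_i$. This is why I use the prescribed deterministic $\Delta_i$ rather than $\Delta(K_i)$ itself, and rely on the indicator of $E_i$ to make the bound hold everywhere (the quantity is $0$ off $E_i$).
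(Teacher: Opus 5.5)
Your proposal is correct and takes the same route as the paper. You bound the truncated inner product almost surely by $2(M_{\Delta,\delta}+G_\Delta)\sqrt{L_\Delta\Delta_i}$ via the prescribed deterministic thresholds, then rerun the Azuma-plus-recovery argument of Lemma \ref{lemma_azuma_fixed_gap_threshold}; your explicit choice of $\theta_i$ and normalized $Z_i$, and your use of the second inequality of \eqref{ineq_gradient_gap_bounds} with the fixed constant $L_\Delta$, just spell out the step the paper summarizes as ``the argument still applies.''
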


\begin{proof}
Note that from \eqref{bd_gd_via_opt_gap} we have 
\begin{align*}
 \abs{\inner{\nabla f(K_i)}
       {\delta_i\mathbbm{1}_{E_i\cap\cbr{\norm{G_i}\leq M_{\Delta,\delta}}}}}
 \leq (M_{\Delta,\delta}+G_\Delta)\norm{\nabla f(K_i)}\mathbbm{1}_{E_i} 
 \leq 2(M_{\Delta,\delta}+G_\Delta)\sqrt{L_{\Delta}\Delta_i}.
\end{align*}
Since $\Delta_t \leq \Delta$ as required in \eqref{basic_requirement_delta_seq}, in view of the definition of $E_t$ in \eqref{def_good_event}, the argument following \eqref{ineq_sqrt_azuma_abs_bd} in  Lemma~\ref{lemma_azuma_fixed_gap_threshold} still applies, from which the desired claim follows. 
\end{proof}

With Lemma \ref{lemma_azuma_varying_gap_thresholds} in place, one can then improve the convergence of SPG to $\cO(1/T)$ by following similar lines as in Theorem \ref{thrm_sqrt_rate}.

\begin{theorem}\label{thrm_fast_rate}
Let $\Delta=6\Delta(K_0)$.
Suppose Condition \ref{condition_noise} holds and
$G_\Delta\rbr{b+\sqrt{V_\Delta\delta}}/\mu\leq \Delta(K_0)$.
Denote
\begin{align}\label{def_fast_rate_agg_coeff}
 D=\frac{8L_{\Delta}}{\mu^2}
       \sbr{M_{\Delta,\delta}^2+768(M_{\Delta,\delta}+G_\Delta)^2\log(1/\delta)}, ~
 N=\ceil{\max\cbr{4,\frac{2M_{\Delta,\delta}}{\mu r_{\Delta}},
                      \frac{D}{\Delta(K_0)}}}, ~ C=2(N\Delta(K_0)+D).
\end{align}
Set
$
 \eta_t=\frac{2}{\mu(N+t)}. 
$
Then with probability at least $1-2T\delta$, we have
\begin{align}
 \Delta(K_t)\leq \frac{2G_\Delta\rbr{b+\sqrt{V_\Delta\delta}}}{\mu}  +\frac{C}{N+t}, 
 \label{ineq_policy_gap_fast_rate}
\end{align}
for $0\leq t\leq T$.
\end{theorem}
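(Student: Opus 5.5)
We apply Lemma \ref{lemma_azuma_varying_gap_thresholds} with the prescribed thresholds
\[
\Delta_0=\Delta(K_0), \qquad \Delta_t=\frac{2G_\Delta\rbr{b+\sqrt{V_\Delta\delta}}}{\mu}+\frac{C}{N+t}, \quad t\geq 1 .
\]
Two things must then be checked. The first is admissibility, $\Delta_t\leq\Delta$ as required in \eqref{basic_requirement_delta_seq}. The second is the self-consistency condition $\Delta_t'\leq\Delta_t$ for $1\leq t\leq T$, where $\Delta_t'$ is given by \eqref{def_azuma_gap_envelope_fast_rate}. Once both hold, the lemma gives $\Delta(K_t)\leq\Delta_t'\leq\Delta_t$ with probability at least $1-2T\delta$, which is \eqref{ineq_policy_gap_fast_rate}.

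\emph{Stepsize conditions.} As in Theorem \ref{thrm_sqrt_rate}, $N\geq 4$ gives $\eta_t\leq 1/\mu$. The bound $N\geq 2M_{\Delta,\delta}/(\mu r_\Delta)$ gives $\eta_tM_{\Delta,\delta}\leq r_\Delta$.

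\emph{Admissibility.} We use the assumption $G_\Delta(b+\sqrt{V_\Delta\delta})/\mu\leq\Delta(K_0)$, together with $C/N=2\Delta(K_0)+2D/N\leq 4\Delta(K_0)$, which follows from $N\geq D/\Delta(K_0)$. These give
\[
\Delta_t\leq 2\Delta(K_0)+4\Delta(K_0)=6\Delta(K_0)=\Delta .
\]

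\emph{Self-consistency.} We use $\Lambda_t=\frac{(N+t-2)(N+t-1)}{(N-2)(N-1)}$ and bound each term of $\Delta_t'$ in turn.

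The first term is $\Delta(K_0)/\Lambda_t\leq \Delta(K_0)(N-1)^2/(N+t-1)^2$. Since $(N-1)/(N+t-1)\leq 1$ and $N+t\leq 2(N+t-1)$, this is at most $N\Delta(K_0)/(N+t)$, up to a constant which should be absorbed into $C=2(N\Delta(K_0)+D)$.

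The bias term contributes $G_\Delta(b+\sqrt{V_\Delta\delta})/\mu$, which is half of the first summand of $\Delta_t$.

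For the quadratic term, \eqref{sum_second_moment_grad} gives the bound $8L_\Delta M_{\Delta,\delta}^2/(\mu^2(N+t))$, which is at most $D/(N+t)$.

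\emph{Main obstacle.} The key term is the square-root martingale term. We substitute $\Delta_i\leq \beta+C/(N+i)$, with $\beta=2G_\Delta(b+\sqrt{V_\Delta\delta})/\mu$, and $\Delta_0=\Delta(K_0)\leq C/N$. Using $\Lambda_{i+1}\eta_i\lesssim (N+i)/(\mu N^2)$, we have
\[
\tsum_{i<t}\Lambda_{i+1}^2\eta_i^2\Delta_i\lesssim \frac{1}{\mu^2N^4}\Bigl(\beta(N+t)^3+C(N+t)^2\Bigr).
\]
Dividing by $\Lambda_t^2\sim (N+t)^4/N^4$, the whole term is bounded by
\[
\frac{c(M+G)\sqrt{L\log(1/\delta)}}{\mu}\left(\sqrt{\frac{\beta}{N+t}}+\frac{\sqrt{C}}{N+t}\right).
\]
We handle each piece with AM--GM, $xy\leq x^2/2+y^2/2$.

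For the second piece, $\frac{\sqrt{C}\,a}{N+t}\leq \frac{C/4+a^2}{N+t}$ with $a^2=c^2(M+G)^2L\log(1/\delta)/\mu^2$. This $a^2$ must be at most the $768$-part of $D$ (scaled). This is where the constant $768$ must be verified, and it is the step I expect to be the delicate computation.

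For the first piece, $a\sqrt{\beta/(N+t)}\leq \beta/2+a^2/(2(N+t))$. This uses the spare half of the bias allowance in $\Delta_t$ together with more of $D$.

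\emph{Summing.} Adding all contributions yields
\[
\Delta_t'\leq \beta+\frac{N\Delta(K_0)+C/4+D+\cdots}{N+t}\leq \Delta_t ,
\]
provided $C\geq 2(N\Delta(K_0)+D)$, which matches the choice of $C$ after careful constant tracking. Checking that the explicit constants close is the main obstacle. Everything else is a direct application of Lemma \ref{lemma_azuma_varying_gap_thresholds}.
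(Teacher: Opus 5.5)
Your skeleton is the paper's. You apply Lemma \ref{lemma_azuma_varying_gap_thresholds} with $\Delta_t=2\beta+C/(N+t)$, where $\beta:=G_\Delta(b+\sqrt{V_\Delta\delta})/\mu$ is half of your $\beta$; your choice $\Delta_0=\Delta(K_0)$ works equally well. Your checks of the stepsize conditions and of $\Delta_t\le\Delta$ are correct, and no idea is missing. The gap is one of execution, but it sits exactly where the content of this theorem lies. Because $D$, $N$ and $C$ are fixed explicit quantities, verifying $\Delta_t'\le\Delta_t$ \emph{is} the proof, and you leave it unchecked.

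Worse, your one explicit estimate there is too lossy. You bound $\Delta(K_0)/\Lambda_t$ by $2N\Delta(K_0)/(N+t)$ and propose to absorb the factor into $C$. But $C=2(N\Delta(K_0)+D)$ is prescribed, so nothing can be absorbed. The closing step relies on the identity $N\Delta(K_0)+C/4+D=\frac{3C}{4}$. With coefficient $2$ on $N\Delta(K_0)$, and the other terms accounted for as in the paper, you would instead need $N\Delta(K_0)\le D$. The definition of $N$ does not supply this; it only guarantees $N\Delta(K_0)\ge D$. You need the sharp bound
\[
1/\Lambda_t=\frac{(N-2)(N-1)}{(N+t-2)(N+t-1)}\le\frac{N}{N+t},
\]
which follows from $\frac{N-2}{N+t-2}\le\frac{N}{N+t}$ and $\frac{N-1}{N+t-1}\le 1$. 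Your own chain also gives this if you use $\frac{N-1}{N+t-1}\le\frac{N}{N+t}$ instead of $N+t\le 2(N+t-1)$.

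The second unverified step is the martingale term. There you carry an unspecified constant $c$ and never check it against the $768$ in $D$. Use the exact identity $\frac{\Lambda_{i+1}\eta_i}{\Lambda_t}=\frac{2(N+i-1)}{\mu(N+t-2)(N+t-1)}$, together with $\Delta_i\le 2\beta+C/(N+i)$ and $N\ge 4$, to get
\[
\sum_{i<t}\Bigl(\frac{\Lambda_{i+1}\eta_i}{\Lambda_t}\Bigr)^2\Delta_i\le\frac{128\beta}{3\mu^2(N+t)}+\frac{32C}{\mu^2(N+t)^2}.
\]
The martingale term is then at most $\frac{64(M_{\Delta,\delta}+G_\Delta)}{\mu}\bigl(\sqrt{L_\Delta\beta\log(1/\delta)/(N+t)}+\sqrt{L_\Delta C\log(1/\delta)}/(N+t)\bigr)$. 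AM--GM turns this into $\frac{\beta}{2}+\frac{C}{4(N+t)}+\frac{6144L_\Delta(M_{\Delta,\delta}+G_\Delta)^2\log(1/\delta)}{\mu^2(N+t)}$. Since $6144=8\cdot 768$, adding the quadratic term $8L_\Delta M_{\Delta,\delta}^2/(\mu^2(N+t))$ gives exactly $D/(N+t)$. Therefore $\Delta_t'\le\frac{3\beta}{2}+\frac{N\Delta(K_0)+C/4+D}{N+t}=\frac{3\beta}{2}+\frac{3C}{4(N+t)}\le\Delta_t$. With these two computations filled in, your outline becomes the paper's proof.
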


\begin{proof}
Let $\beta=G_\Delta\rbr{b+\sqrt{V_\Delta\delta}}/\mu$.
Clearly we have $\beta \leq \Delta(K_0)$.  
Consider choosing $\cbr{\Delta_t}$ as
\begin{align*}
 \Delta_t=2\beta+\frac{C}{N+t},~ 0\leq t\leq T.
\end{align*}
We first proceed to show that the defined $\cbr{\Delta_t}$ and $\Delta$ satisfy \eqref{basic_requirement_delta_seq}. 
It can be readily verified that we have  $\Delta(K_0)\leq \Delta_0$, $\Delta(K_0) \leq \Delta$, and that 
\begin{align*}
 \Delta_i
 \leq 2\beta+\frac{C}{N+i}
 \leq 2\beta+2\Delta(K_0)+\frac{2D}{N}  \leq 6\Delta(K_0) = \Delta. 
\end{align*}
Clearly, the choice of stepsize $\cbr{\eta_t}$ satisfies the condition in Lemma \ref{lemma_azuma_varying_gap_thresholds}, and hence it remains to verify that $\Delta_t' \leq \Delta_t$. 
As in the proof of Theorem \ref{thrm_sqrt_rate}, 
one can readily verify that
$
 \Lambda_t=\frac{(N+t-2)(N+t-1)}{(N-2)(N-1)}
$, and
\begin{align*}
 \tsum_{i<t}\rbr{\frac{\Lambda_{i+1}\eta_i}{\Lambda_t}}^2\Delta_i
 \leq \frac{128\beta}{3\mu^2(N+t)}
       +\frac{32C}{\mu^2(N+t)^2}.
\end{align*}
Consequently, we obtain 
\begin{align*}
 &\frac{8(M_{\Delta,\delta}+G_\Delta)}{\Lambda_t}
       \sqrt{L_{\Delta}\log(1/\delta)\tsum_{i<t}\Lambda_{i+1}^2\eta_i^2\Delta_i}\\
\overset{(a)}{\leq} &  \frac{64(M_{\Delta,\delta}+G_\Delta)}{\mu}
       \sqrt{\frac{L_{\Delta}\beta\log(1/\delta)}{N+t}}
       +\frac{64(M_{\Delta,\delta}+G_\Delta)
                     \sqrt{L_{\Delta}C\log(1/\delta)}}{\mu(N+t)}\\
\overset{(b)}{\leq} & \frac{\beta}{2}+\frac{C}{4(N+t)}
       +\frac{6144L_{\Delta}(M_{\Delta,\delta}+G_\Delta)^2\log(1/\delta)}{\mu^2(N+t)}, 
\end{align*}
where $(a)$ uses the inequality $\sqrt{a + b} \leq \sqrt{a} + \sqrt{b}$, and $(b)$ applies $ab \leq (a^2 + b^2 )/2$. 
Note that \eqref{sum_second_moment_grad} still holds.
Hence by combining the above observation with \eqref{sum_second_moment_grad}, 
together with the definition of $\Delta_t'$ in \eqref{def_azuma_gap_envelope_fast_rate}, 
we obtain 
\begin{align}
 \Delta_t' & \leq   \frac{N\Delta(K_0)}{N+t}+ \frac{3\beta}{2} +\frac{8L_{\Delta}M_{\Delta,\delta}^2}{\mu^2(N+t)} + \frac{C}{4(N+t)}
       +\frac{6144L_{\Delta}(M_{\Delta,\delta}+G_\Delta)^2\log(1/\delta)}{\mu^2(N+t)}  \label{fast_rate_opt_gap_raw} \\
& \leq 
\frac{3\beta}{2}+\frac{N\Delta(K_0)+C/4+D}{N+t}
 =\frac{3\beta}{2}+\frac{3C}{4(N+t)}
 \leq 2\beta+\frac{C}{N+t}=\Delta_t \nonumber
\end{align}
for $1 \leq t \leq T$. 
Hence applying Lemma \ref{lemma_azuma_varying_gap_thresholds} yields that $\Delta(K_t) \leq \Delta_t'$ with probability at least $1- 2 T\delta$. 
The desired claim then follows from  
\eqref{fast_rate_opt_gap_raw} with the choice of $C$ and $\beta$. 
\end{proof}

In view of Theorem \ref{thrm_fast_rate}, we can immediately obtain the following $\cO(1/\epsilon)$ iteration complexity of SPG for finding an $\epsilon$-optimal policy.

\begin{corollary}\label{corr_sample_complexity_fast_rate}
Suppose Condition \ref{condition_noise} holds. Choose the stepsize $\cbr{\eta_t}$ as in Theorem \ref{thrm_fast_rate}. 
For any $0<\epsilon\leq \Delta(K_0)$ and $0<\delta<1$ such that
\begin{align}
 \frac{2G_\Delta}{\mu}\rbr{b+\sqrt{V_\Delta\delta}}
 \leq \frac{\epsilon}{2}, \label{condition_bias_accuracy_fast_rate}
\end{align}
 with probability at least $1-2T\delta$, we have $f(K_T) - f^* \leq \epsilon$
if
$T\geq \max\cbr{1,\frac{2C}{\epsilon}}$,
and $K_t$ is stable for any $0 \leq t \leq T$.
\end{corollary}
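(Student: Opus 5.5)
The plan is to read Corollary \ref{corr_sample_complexity_fast_rate} off Theorem \ref{thrm_fast_rate}, instantiated with the same $\Delta = 6\Delta(K_0)$, $N$, $C$, $D$ and stepsizes $\eta_t = \frac{2}{\mu(N+t)}$. First we check the theorem's hypothesis $G_\Delta(b+\sqrt{V_\Delta\delta})/\mu \leq \Delta(K_0)$. Condition \eqref{condition_bias_accuracy_fast_rate} gives $\beta \coloneqq G_\Delta(b+\sqrt{V_\Delta\delta})/\mu \leq \epsilon/4$. Since $\epsilon \leq \Delta(K_0)$, this yields $\beta \leq \Delta(K_0)/4 \leq \Delta(K_0)$. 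Theorem \ref{thrm_fast_rate} therefore applies: with probability at least $1-2T\delta$, the bound \eqref{ineq_policy_gap_fast_rate} holds simultaneously for all $0 \leq t \leq T$.

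On that event, evaluate \eqref{ineq_policy_gap_fast_rate} at $t = T$. This gives $\Delta(K_T) \leq 2\beta + \frac{C}{N+T}$. The first term satisfies $2\beta \leq \epsilon/2$ by \eqref{condition_bias_accuracy_fast_rate}. For the second term, $T \geq 2C/\epsilon$ and $N \geq 4 > 0$ give $\frac{C}{N+T} \leq \frac{C}{T} \leq \epsilon/2$. Hence $f(K_T) - f^* = \Delta(K_T) \leq \epsilon$. The extra requirement $T \geq 1$ only guarantees that at least one iteration is run, so that the range $1 \le t \le T$ used in the theorem is nonempty.

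For stability, on the same event every iterate satisfies $\Delta(K_t) \leq 2\beta + C/(N+t) \leq 2\beta + C/N < \infty$. Since stability of $K$ is equivalent to $\Delta(K) < \infty$ (noted after the definition of stability in Section \ref{sec_spg}), each $K_t$, $0 \leq t \leq T$, is stable. If an explicit finite threshold is wanted, the proof of Theorem \ref{thrm_fast_rate} gives $\Delta(K_t) \leq \Delta_t \leq \Delta = 6\Delta(K_0)$.

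No step is a real obstacle; the argument is bookkeeping. The one thing to watch is that the probability statement of Theorem \ref{thrm_fast_rate} is uniform over $t$ (it comes from Lemma \ref{lemma_azuma_varying_gap_thresholds}), so the accuracy claim at $T$ and the stability of all intermediate $K_t$ hold on a single event of probability at least $1-2T\delta$, with no further union bound.
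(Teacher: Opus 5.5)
Your proposal is correct and follows the paper's own route. The paper gives no separate proof and says the corollary follows immediately from Theorem \ref{thrm_fast_rate}; your argument is exactly that: check that \eqref{condition_bias_accuracy_fast_rate} gives $\beta \le \epsilon/4 \le \Delta(K_0)$, bound $2\beta + C/(N+T) \le \epsilon/2 + \epsilon/2$, and get stability from finiteness of $\Delta(K_t)$ on the same uniform-in-$t$ event. (Your remark about $T \ge 1$ is harmless but unnecessary, since $T \ge 2C/\epsilon > 0$ already forces $T \ge 1$ and the theorem also covers $t = 0$.)
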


%% file: oracle.tex


\section{Construction of Stochastic Oracle}\label{sec_oracle}

Let us recall the following construction of the two-point estimator first developed for convex optimization  \cite{agarwal2010optimal, duchi2015optimal, shamir2017optimal}  and later studied in the context of LQR \cite{malik2020derivative}.
Specifically, for a given $\alpha > 0$, let us randomly sample $x_0 \sim \cD$, and $U$ uniformly distributed over the unit sphere in $\RR^{m \times n}$, 
and construct 
\begin{align} \label{def_two_point_gradient_estimator}
 G^\alpha(K) = \frac{d}{2\alpha} \rbr{F(K+\alpha U;x_0) - F(K-\alpha U;x_0)}U,
\end{align}
where $d=mn$,
and $F(K; x) = \tsum_{i \geq 0} x_i^\top (Q + K^\top R K) x_i$ where $x_i = (A-BK)^i x$.
As will be seen in the ensuing Proposition \ref{prop_two_point_ideal}, the two-point estimator generalizes the classical one-point estimator for zeroth-order convex optimization \cite{flaxman2004online,nemirovskij1983problem} with the benefit of reduced variance  that can be made independent of $\alpha$ \cite{agarwal2010optimal, duchi2015optimal, shamir2017optimal}.

\begin{proposition}\label{prop_two_point_ideal}
For any $\Delta > 0$, let $L_\Delta, r_\Delta$ be defined in Lemma \ref{lemma_lqr_landscape}. 
There exists $\ell_\Delta \geq 0$ such that for any $0 < \alpha \leq r_\Delta$, 
the estimator $G^\alpha(K)$ defined in \eqref{def_two_point_gradient_estimator} satisfies Condition \ref{condition_noise} 
with 
\begin{align}\label{def_two_point_oracle_parameters}
 b=L_{\Delta}\alpha,~V_\Delta=d^2\ell_\Delta^2,~M_{\Delta,\delta}=d\ell_\Delta, 
\end{align}
for any $0 < \delta < 1$. 
\end{proposition}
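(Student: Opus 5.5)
The plan is to check the three requirements of Condition \ref{condition_noise} in turn, all at a fixed policy $K$ with $\Delta(K) \leq \Delta$. The key tool is a Lipschitz bound on the per-sample cost $F(\cdot;x_0)$ over the ball $\{K' : \norm{K'-K}\leq r_\Delta\}$.

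\textbf{Step 1 (uniform Lipschitz bound on $F$).} First we show there is $\ell_\Delta$ such that
\begin{align*}
\abs{F(K_1;x_0)-F(K_2;x_0)} \leq \ell_\Delta \norm{K_1-K_2}
\end{align*}
for all $K_1,K_2$ in the ball and all $\norm{x_0}\leq D_X$.

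By Lemma \ref{lemma_lqr_landscape}, every $K'$ in the ball is stable. We have $F(K';x_0)=x_0^\top P_{K'} x_0$, where $P_{K'}$ solves the Lyapunov equation $P = Q + K'^\top R K' + (A-BK')^\top P (A-BK')$. Moreover $f(K')=\mathrm{tr}(P_{K'})$ because $\EE[x_0x_0^\top]=I_n$. So it suffices to bound $\norm{P_{K_1}-P_{K_2}}$ by a multiple of $\norm{K_1-K_2}$ on the ball.

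Smoothness (in fact analyticity) of $K'\mapsto P_{K'}$ on the open set of stable gains gives such a bound on any compact subset. We apply it to the closed ball, or to a slightly smaller ball, shrinking $r_\Delta$ if needed, which is harmless. This yields $\ell_\Delta$ uniformly in $x_0$, using $\norm{x_0}\leq D_X$. Alternatively we can cite the perturbation bounds in \cite{malik2020derivative}, which give exactly such a local Lipschitz constant depending only on $\Delta$.

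\textbf{Step 2 (bounds \eqref{condition_bound_moment} and \eqref{condition_bound_grad}).} Since $0<\alpha\leq r_\Delta$, both $K\pm\alpha U$ lie in the ball. Step 1 then gives
\begin{align*}
\abs{F(K+\alpha U;x_0)-F(K-\alpha U;x_0)}\leq 2\alpha\ell_\Delta,
\end{align*}
so $\norm{G^\alpha(K)}\leq d\ell_\Delta$ almost surely, using $\norm{U}=1$. This gives \eqref{condition_bound_grad} with probability one, for every $\delta$, with $M_{\Delta,\delta}=d\ell_\Delta$. It also gives \eqref{condition_bound_moment} with $V_\Delta=d^2\ell_\Delta^2$.

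\textbf{Step 3 (bias \eqref{condition_bound_bias}).} We use the standard smoothing identity. Take the expectation over $x_0$ first, which replaces $F$ by $f$ since $x_0$ is independent of $U$. Then symmetry of $U$ gives $\EE[G^\alpha(K)]=\frac{d}{\alpha}\EE[f(K+\alpha U)U]=\nabla f_\alpha(K)$, where $f_\alpha(K)=\EE_{V}[f(K+\alpha V)]$ and $V$ is uniform on the unit ball. This is Stokes' theorem (the Flaxman et al.\ identity), and it is valid because $f$ is smooth on the ball of radius $r_\Delta$ around $K$.

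It follows that $\norm{\nabla f_\alpha(K)-\nabla f(K)}\leq \EE\norm{\nabla f(K+\alpha V)-\nabla f(K)}$. Local smoothness with constant $L_\Delta$ on the ball (i.e.\ $\nabla f$ is $L_\Delta$-Lipschitz there) bounds this by $L_\Delta\alpha$.

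\textbf{Main obstacle.} The delicate point is Step 3's use of a gradient-Lipschitz property. Inequality \eqref{ineq_local_smoothness} as stated is only a one-sided upper quadratic bound. We would either invoke \cite{malik2020derivative}, whose $L_\Delta$ is a genuine Lipschitz constant of $\nabla f$ on the $r_\Delta$-ball, or enlarge $L_\Delta$ so that it is. A second point is that Step 1's constant must be uniform over the ball around every $K$ in the sublevel set. This follows from compactness of $\{K:\Delta(K)\leq\Delta\}$, thickened by $r_\Delta$, inside the stable set, and coercivity of $f$ gives that compactness.
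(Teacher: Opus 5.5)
Your proposal is correct and follows essentially the same route as the paper, whose proof simply defers to \cite[Lemma 14 and Corollary 10]{malik2020derivative}. Your three steps match the content of that citation: a local Lipschitz bound on $F(\cdot;x_0)$, uniform over $\norm{x_0}\leq D_X$ and over the $r_\Delta$-balls around the sublevel set, gives $\norm{G^\alpha(K)}\leq d\ell_\Delta$ almost surely (hence $V_\Delta$ and $M_{\Delta,\delta}$ for every $\delta$), and the smoothing identity $\EE[G^\alpha(K)]=\nabla f_\alpha(K)$ plus local gradient-Lipschitzness gives bias $L_\Delta\alpha$; the one-sided-versus-Lipschitz point you flag is harmless, since, as you note, enlarging $L_\Delta$ to a gradient-Lipschitz constant on the $r_\Delta$-balls keeps Lemma \ref{lemma_lqr_landscape} valid.
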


The proof of Proposition \ref{prop_two_point_ideal} can be found in \cite[Lemma 14 and Corollary 10]{malik2020derivative}.
It should be noted that the norm of the stochastic gradient $M_{\Delta, \delta}$ is independent of the confidence level $\delta$ for $G^\alpha(K)$ defined above,
and \eqref{condition_bound_grad} holds almost surely in this case. 
Of course, $G^\alpha (K)$ should be viewed as a conceptual estimator as $F(K; x)$ requires collecting a trajectory of infinite length. 
In this case, we show that $F(K; x)$ can be approximated well via a trajectory of length $h$, uniformly for all $K$ with bounded optimality gap.

\begin{lemma}\label{lemma_lqr_truncation}
For any stable policy $K$ and any $x \in \RR^n$, 
define  
$
 F_h (K; x) = \tsum_{i < h} x_i^\top (Q + K^\top R K) x_i, 
$
where $x_i = (A - BK)^i x$.
For any $\Delta > 0$, let  $f_\Delta = f^* + \Delta$, and suppose $\Delta(K) \leq \Delta$. Then 
\begin{align}\label{ineq_two_point_return_tail}
0 \leq F(K; x) - F_h(K; x) \leq 
\rbr{
1 - \frac{\lambda_{\min} (Q) }{ f_\Delta }
}^h  f_\Delta \norm{x}^2.
\end{align}
\end{lemma}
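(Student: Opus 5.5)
We plan to use the Lyapunov (value) matrix of the closed-loop system. Write $A_K = A - BK$ and let $P_K$ solve $P_K = Q + K^\top R K + A_K^\top P_K A_K$. Stability of $K$ gives $P_K = \sum_{i\geq 0} (A_K^\top)^i (Q + K^\top R K) A_K^i$, so $F(K;x) = x^\top P_K x$. The tail is then
\[
F(K;x) - F_h(K;x) = \tsum_{i\geq h} x_i^\top (Q+K^\top R K) x_i = x_h^\top P_K x_h \geq 0,
\]
because $P_K \succeq 0$. This settles the lower bound. It also reduces the upper bound to controlling $x_h^\top P_K x_h$ by $\norm{x}^2$.

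The key step is a one-step contraction of the quadratic form $V(y) = y^\top P_K y$ along the trajectory. From the Lyapunov equation,
\[
V(x_{i+1}) = V(x_i) - x_i^\top (Q + K^\top R K) x_i \leq V(x_i) - \lambda_{\min}(Q)\norm{x_i}^2.
\]
Next we use $\norm{x_i}^2 \geq V(x_i)/\norm{P_K}$, which gives
\[
V(x_{i+1}) \leq \rbr{1 - \frac{\lambda_{\min}(Q)}{\norm{P_K}}} V(x_i).
\]
Iterating this $h$ times yields $x_h^\top P_K x_h \leq (1 - \lambda_{\min}(Q)/\norm{P_K})^h\, x^\top P_K x$.

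It remains to bound $\norm{P_K}$ by $f_\Delta$. Since $\EE[x_0x_0^\top] = I_n$, we have $f(K) = \EE[x_0^\top P_K x_0] = \mathrm{tr}(P_K)$. Because $P_K \succeq 0$, this gives $\norm{P_K} \leq \mathrm{tr}(P_K) = f(K) \leq f^* + \Delta = f_\Delta$. This single fact does two jobs. First, the contraction factor satisfies $1 - \lambda_{\min}(Q)/\norm{P_K} \leq 1 - \lambda_{\min}(Q)/f_\Delta$. Second, $x^\top P_K x \leq f_\Delta \norm{x}^2$. Combining the two gives \eqref{ineq_two_point_return_tail}.

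The contraction factor is automatically nonnegative, since $P_K \succeq Q$ implies $\norm{P_K} \geq \lambda_{\min}(Q)$. The main step requiring care is identifying $f(K)$ with $\mathrm{tr}(P_K)$, which relies on $\EE[x_0x_0^\top] = I_n$ and on stability so that the series converges. The rest is a routine Lyapunov argument.
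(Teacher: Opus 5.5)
Your proposal is correct and follows essentially the same route as the paper: the Lyapunov identity $F(K;A_Kx)=F(K;x)-x^\top(Q+K^\top RK)x$, the bound $F(K;x)\le \|P_K\|\,\|x\|^2\le \mathrm{tr}(P_K)\|x\|^2=f(K)\|x\|^2$, and iteration of the resulting one-step contraction on the tail $F(K;x_h)$. The only cosmetic differences are that you keep $\|P_K\|$ in the contraction factor before replacing it by $f_\Delta$, whereas the paper uses $f(K)$ directly, and that you make explicit the nonnegativity of that factor and of the tail, which the paper leaves implicit.
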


\begin{proof}
Since $K$ is stable, there exists  $P_K \succ 0$ being the unique solution satisfying 
\begin{align}\label{lqr_dp_eval}
 P_K=Q+K^\top RK+(A-BK)^\top P_K(A-BK).
\end{align}
In addition, we have 
$F(K; x) = x^\top P_K x$, and $f(K) = \EE_{x_0} F(K; x_0) = \mathrm{tr}(P_K)$,
where the last equality follows from \eqref{eq_dynamics}. 
From this we obtain 
\begin{align*}
F(K; x) \leq \norm{P_K} \norm{x x^\top} \leq \mathrm{tr} (P_K) \norm{x}^2 \leq f(K) \norm{x}^2.
\end{align*}
Combining the above observation with \eqref{lqr_dp_eval}, we have 
\begin{align*}
F(K; (A-BK) x) = F(K; x) - x^\top (Q + K^\top R K) x 
\leq F(K; x) - \lambda_{\min} (Q) \norm{x}^2 
\leq \rbr{
1 - \frac{\lambda_{\min} (Q) }{ f(K) }
} F(K; x).
\end{align*}
Recursive application of the above inequality then yields
\begin{align*}
F(K; x) - F_h(K;x) = F(K; (A-BK)^h x) \leq \rbr{
1 - \frac{\lambda_{\min} (Q) }{ f(K) }
}^h F(K; x)
\leq 
\rbr{
1 - \frac{\lambda_{\min} (Q) }{ f(K) }
}^h  f(K) \norm{x}^2 
\end{align*}
from which the desired claim follows. 
\end{proof}

With Lemma \ref{lemma_lqr_truncation} in place, let us consider  
\begin{align}\label{def_finite_two_point_gradient_estimator}
 G_h^\alpha(K)=\frac{d}{2\alpha}\rbr{F_h(K+\alpha U;x_0)-F_h(K-\alpha U;x_0)}U.
\end{align}
We proceed to show that $ G_h^\alpha(K)$ defined above can again certify Condition \ref{condition_noise} with a proper choice of truncation length $h$.

\begin{proposition}\label{prop_finite_two_point_gradient_oracle}
For any $\Delta>0$, let $L_\Delta,r_\Delta,\ell_\Delta$ be as in Proposition \ref{prop_two_point_ideal}.
For any $0<\alpha\leq r_\Delta$ and integer $h\geq1$, define
$
 \varepsilon_h=\frac{d f_{2\Delta}D_X^2}{2\alpha}\rbr{1-\frac{\lambda_{\min}(Q)}{f_{2\Delta}}}^h .
$
Then the estimator $G_h^\alpha(K)$ in \eqref{def_finite_two_point_gradient_estimator} satisfies Condition \ref{condition_noise} with
\begin{align}\label{def_finite_two_point_oracle_parameters}
 b=L_\Delta\alpha+\varepsilon_h,~V_\Delta=(d\ell_\Delta+\varepsilon_h)^2,~M_{\Delta,\delta}=d\ell_\Delta+\varepsilon_h,
\end{align}
for any $0<\delta<1$.
\end{proposition}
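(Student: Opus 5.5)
Since $G_h^\alpha(K)$ differs from the conceptual estimator $G^\alpha(K)$ of Proposition \ref{prop_two_point_ideal} only through the truncation, I would compare the two pathwise. Fix $K$ with $\Delta(K)\leq\Delta$, and let $U$ and $x_0$ be shared by both estimators. Then
\begin{align*}
G^\alpha(K)-G_h^\alpha(K)=\frac{d}{2\alpha}\Big(\big[F(K+\alpha U;x_0)-F_h(K+\alpha U;x_0)\big]-\big[F(K-\alpha U;x_0)-F_h(K-\alpha U;x_0)\big]\Big)U .
\end{align*}
Once I show $\norm{G^\alpha(K)-G_h^\alpha(K)}\leq\varepsilon_h$ almost surely, all three parts of Condition \ref{condition_noise} follow from Proposition \ref{prop_two_point_ideal} by the triangle inequality:
\begin{itemize}
\item the bias is at most $L_\Delta\alpha+\varepsilon_h$;
\item the norm satisfies $\norm{G_h^\alpha}\leq\norm{G^\alpha}+\varepsilon_h\leq d\ell_\Delta+\varepsilon_h$ almost surely, since $M_{\Delta,\delta}=d\ell_\Delta$ holds a.s. for $G^\alpha$;
\item hence \eqref{condition_bound_grad} holds with probability one, and squaring gives the second-moment bound $(d\ell_\Delta+\varepsilon_h)^2$.
\end{itemize}

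The main step is the pathwise truncation bound, which uses Lemma \ref{lemma_lqr_truncation} at the perturbed policies $K\pm\alpha U$. For this I need $K\pm\alpha U$ to be stable with $\Delta(K\pm\alpha U)\leq 2\Delta$. Stability comes from Lemma \ref{lemma_lqr_landscape}, because $\norm{\alpha U}=\alpha\leq r_\Delta$.

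For the gap, I would use local smoothness \eqref{ineq_local_smoothness} with $K'=K\pm\alpha U$:
\begin{align*}
\Delta(K') \leq \Delta(K)+\alpha\norm{\nabla f(K)}+\frac{L_\Delta\alpha^2}{2}.
\end{align*}
With $\norm{\nabla f(K)}\leq L_\Delta r_\Delta$ (from the proof of Lemma \ref{lemma_lqr_landscape}) and the normalization $L_\Delta\geq 2\Delta/r_\Delta^2$, this only gives a bound of order $\Delta+\tfrac32 L_\Delta r_\Delta^2$, which need not be $\leq 2\Delta$. This is the obstacle I expect. My first fix would be to shrink $r_\Delta$ without loss of generality, keeping the conclusions of Lemma \ref{lemma_lqr_landscape}, so that $\alpha\, G_\Delta+L_\Delta\alpha^2/2\leq\Delta$ for $\alpha\leq r_\Delta$. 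Failing that, I would cite the corresponding fact in \cite{malik2020derivative}, whose Lemma 14 already restricts the smoothing radius so that perturbed policies have bounded cost.

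Granting $\Delta(K\pm\alpha U)\leq 2\Delta$, i.e. $f(K\pm\alpha U)\leq f_{2\Delta}$, Lemma \ref{lemma_lqr_truncation} gives for each sign
\begin{align*}
0\leq F-F_h\leq\rbr{1-\frac{\lambda_{\min}(Q)}{f_{2\Delta}}}^h f_{2\Delta}\norm{x_0}^2 .
\end{align*}
Here I use that the bound in that lemma is monotone in $f(K)$: the map $s\mapsto(1-c/s)^h s$ is increasing for $s\geq c$, and $f(K)\geq\lambda_{\min}(Q)$ holds because $f(K)\geq\mathrm{tr}(Q)$. Both bracketed differences are nonnegative and bounded by this quantity, so their difference has absolute value at most the quantity itself, not twice it. Together with $\norm{U}=1$ and $\norm{x_0}\leq D_X$, this yields
\begin{align*}
\norm{G^\alpha(K)-G_h^\alpha(K)}\leq\frac{d}{2\alpha}\rbr{1-\frac{\lambda_{\min}(Q)}{f_{2\Delta}}}^h f_{2\Delta}D_X^2=\varepsilon_h .
\end{align*}
This is the pathwise bound needed above, and the proposition follows.
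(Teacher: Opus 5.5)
Your proposal is correct and matches the paper's proof. You bound $f(K\pm\alpha U)\le f_{2\Delta}$ via local smoothness after shrinking $r_\Delta$ so that $r_\Delta G_\Delta+L_\Delta r_\Delta^2/2\le\Delta$, apply Lemma \ref{lemma_lqr_truncation} at both perturbed policies, and transfer Proposition \ref{prop_two_point_ideal} by the triangle inequality. The shrinking step is exactly the paper's footnote, and it is legitimate because $L_\Delta\ge 2\Delta/r_\Delta^2$ is only needed inside the proof of Lemma \ref{lemma_lqr_landscape}. Your remark that the two truncation errors are both nonnegative, so their difference is bounded by one tail term, is what gives the factor $d/(2\alpha)$ in $\varepsilon_h$; the paper leaves this step implicit.
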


\begin{proof}
In view of Lemma \ref{lemma_lqr_landscape} and \eqref{bd_gd_via_opt_gap}, for any $K$ with $\Delta(K) \leq \Delta$,  we have 
\begin{align*}
f(K + \alpha U) \leq f(K) + \alpha G_\Delta + \frac{L_\Delta \alpha^2}{2} 
\leq f^* + 2\Delta
\end{align*}
for any $\alpha \leq r_\Delta$.\footnote{One can without loss of generality assume that $r_\Delta$ is small enough
such that $r_\Delta G_\Delta + \frac{L_\Delta r_\Delta^2}{2} \leq \Delta$. 
}
Hence Lemma \ref{lemma_lqr_truncation} applies to both $K+ \alpha U$ and $K-\alpha U$, 
from which the desired claim follows after combining with Proposition \ref{prop_two_point_ideal}.
\end{proof}

We are now ready to determine the number of transitions needed by SPG for finding an $\epsilon$-optimal policy, when the stochastic gradient estimator is given by \eqref{def_finite_two_point_gradient_estimator}.

\begin{corollary}\label{corr_two_point_transition_complexity}
Set $\Delta=6\Delta(K_0)$.
For any $\epsilon \leq \Delta(K_0)$ and $\delta \in (0,1)$ such that
$
 \delta\leq\rbr{\frac{\mu\epsilon}{16G_\Delta d\ell_\Delta}}^2,
$
let the stochastic gradient estimator be given by \eqref{def_finite_two_point_gradient_estimator}, with parameters 
\begin{align}
 \alpha=\min\cbr{r_\Delta,\frac{\mu\epsilon}{16G_\Delta L_\Delta}}, ~~ 
 h =\max\cbr{1,\ceil{\frac{f_{2\Delta}}{\lambda_{\min}(Q)}\log\max\cbr{\frac{f_{2\Delta}D_X^2}{2\alpha\ell_\Delta},\frac{8d f_{2\Delta}D_X^2G_\Delta}{\alpha\mu\epsilon}}}}.
 \label{def_two_point_fast_rate_parameters}
\end{align}
Then Condition \ref{condition_noise} is satisfied with 
\begin{align*}
b=L_\Delta\alpha+\varepsilon_h \leq  \frac{\mu\epsilon}{8G_\Delta}, ~ V_\Delta=4d^2\ell_\Delta^2, ~ M_{\Delta,\delta}=2d\ell_\Delta.
\end{align*}
In addition, run SPG for a total of $T$ iterations with stepsize $\cbr{\eta_t}$ chosen as in Theorem \ref{thrm_fast_rate}.
Then with probability at least $1-2T\delta$, we have $f(K_T) - f^* \leq \epsilon$
if
$T\geq \max\cbr{1,\frac{2C}{\epsilon}}$,
and $K_t$ is stable for any $0 \leq t \leq T$.
Here $C$ is defined as in \eqref{def_fast_rate_agg_coeff}.

\end{corollary}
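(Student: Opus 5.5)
The plan is to reduce the statement to Proposition \ref{prop_finite_two_point_gradient_oracle} and Corollary \ref{corr_sample_complexity_fast_rate}. Most of the work is verifying that the stated choices of $\alpha$, $h$ and $\delta$ make the oracle parameters in \eqref{def_finite_two_point_oracle_parameters} small enough.

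First, I will bound the truncation error $\varepsilon_h$. Since $1-x\leq e^{-x}$, the choice of $h$ in \eqref{def_two_point_fast_rate_parameters} gives
$$\rbr{1-\lambda_{\min}(Q)/f_{2\Delta}}^h\leq \exp\rbr{-h\lambda_{\min}(Q)/f_{2\Delta}}\leq \min\cbr{\frac{2\alpha\ell_\Delta}{f_{2\Delta}D_X^2},\ \frac{\alpha\mu\epsilon}{8df_{2\Delta}D_X^2G_\Delta}}.$$
The logarithm in $h$ may be negative, but then $h=1$ and the corresponding bound is trivial or can be handled by the same inequality. Plugging into the definition of $\varepsilon_h$ yields
$$\varepsilon_h\leq \min\cbr{d\ell_\Delta,\ \frac{\mu\epsilon}{16G_\Delta}}.$$
Since $\alpha\leq r_\Delta$, Proposition \ref{prop_finite_two_point_gradient_oracle} applies. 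It gives $M_{\Delta,\delta}=d\ell_\Delta+\varepsilon_h\leq 2d\ell_\Delta$ and $V_\Delta\leq 4d^2\ell_\Delta^2$, and Condition \ref{condition_noise} clearly still holds after enlarging these bounds. Using $\alpha\leq \mu\epsilon/(16G_\Delta L_\Delta)$, the bias satisfies
$$b=L_\Delta\alpha+\varepsilon_h\leq \frac{\mu\epsilon}{16G_\Delta}+\frac{\mu\epsilon}{16G_\Delta}=\frac{\mu\epsilon}{8G_\Delta}.$$

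Second, I will check \eqref{condition_bias_accuracy_fast_rate}. The requirement on $\delta$ gives $\sqrt{V_\Delta\delta}\leq 2d\ell_\Delta\cdot \mu\epsilon/(16G_\Delta d\ell_\Delta)=\mu\epsilon/(8G_\Delta)$. Hence $b+\sqrt{V_\Delta\delta}\leq \mu\epsilon/(4G_\Delta)$, and therefore
$$\frac{2G_\Delta}{\mu}\rbr{b+\sqrt{V_\Delta\delta}}\leq \frac{\epsilon}{2}.$$
The same computation together with $\epsilon\leq\Delta(K_0)$ gives the hypothesis $G_\Delta(b+\sqrt{V_\Delta\delta})/\mu\leq\Delta(K_0)$ of Theorem \ref{thrm_fast_rate}, with $\Delta=6\Delta(K_0)$ matching that theorem.

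Finally, Corollary \ref{corr_sample_complexity_fast_rate} applies with the constants $N,D,C$ of \eqref{def_fast_rate_agg_coeff}, evaluated at $M_{\Delta,\delta}=2d\ell_\Delta$. It gives $f(K_T)-f^*\leq\epsilon$ and stability of all $K_t$ with probability at least $1-2T\delta$ whenever $T\geq\max\cbr{1,2C/\epsilon}$.

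The main obstacle is the bookkeeping in the first step. The $\varepsilon_h$ bound must hold uniformly for every iterate with $\Delta(K_t)\leq\Delta$, and Proposition \ref{prop_finite_two_point_gradient_oracle} already provides this via $f_{2\Delta}$. One must also handle the degenerate case where the logarithm in $h$ is nonpositive. The constants in $h$ need to be matched carefully against the factor $d/(2\alpha)$ in $\varepsilon_h$.
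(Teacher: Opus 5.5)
Your proposal is correct and follows the paper's own proof. You bound $\varepsilon_h$ by both $d\ell_\Delta$ and $\mu\epsilon/(16G_\Delta)$ through the choice of $h$, and invoke Proposition \ref{prop_finite_two_point_gradient_oracle} to get $b\le \mu\epsilon/(8G_\Delta)$, $V_\Delta\le 4d^2\ell_\Delta^2$, $M_{\Delta,\delta}\le 2d\ell_\Delta$. You then use the bound on $\delta$ to verify \eqref{condition_bias_accuracy_fast_rate} and apply Corollary \ref{corr_sample_complexity_fast_rate}. Your extra checks fill in details the paper leaves implicit: that $h\lambda_{\min}(Q)/f_{2\Delta}$ dominates the logarithm even when the logarithm is negative, and that the hypothesis of Theorem \ref{thrm_fast_rate} follows from $\epsilon\le\Delta(K_0)$.
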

\begin{proof}
The first part of the claim follows directly from Proposition \ref{prop_finite_two_point_gradient_oracle} together with the fact that $L_\Delta\alpha\leq\mu\epsilon/(16G_\Delta)$, $\varepsilon_h\leq\mu\epsilon/(16G_\Delta)$, and $\varepsilon_h \leq d \ell_\Delta$ given the choice of $\alpha$ and $h$.
The second part of the claim then follows  from Corollary \ref{corr_sample_complexity_fast_rate}  by noting that  \eqref{condition_bias_accuracy_fast_rate} is satisfied
with the choice of $ \delta\leq\rbr{\frac{\mu\epsilon}{16G_\Delta d\ell_\Delta}}^2$.
\end{proof}

In view of Corollary \ref{corr_two_point_transition_complexity}, 
to obtain an $\epsilon$-optimal policy with probability at least $1-\delta$, 
it suffices to run SPG for 
$
\cO({C}{\epsilon}^{-1}) = \cO ({\log(1/\delta)}{\epsilon}^{-1})
$
iterations.
Combining this with the fact that $h = \cO(\log (\Delta_0/\epsilon))$, 
 the total number of environment interactions collected by SPG can be bounded by ${\cO}({\tt Polylog}(\epsilon^{-1}, \delta^{-1}) \epsilon^{-1})$.

%% file: conclusion.tex


\section{Concluding Remarks}

We now briefly discuss a few more questions of potential interest. 
First, it is well known that for \eqref{lqr_obj},  with model-based methods a constant number of environment interactions is enough for identifying the exact system parameters. 
Hence it would be interesting to study whether the reported $\cO(1/\epsilon)$ interaction complexity can be further improved. 
In addition,  it is also possible to extend the analysis to average-cost and discounted-cost settings more explicitly by considering detailed constructions of the stochastic gradient oracles satisfying Condition \ref{condition_noise}.